\documentclass[reqno]{amsart}
\usepackage{amssymb}
\usepackage[usenames, dvipsnames]{color}
\usepackage{pxfonts}
\usepackage{enumerate}
\usepackage{amsmath}
\usepackage[driverfallback=dvipdfm]{hyperref}{\tiny}
\usepackage{verbatim}

\allowdisplaybreaks[4]

\numberwithin{equation}{section}

\newtheorem{theorem}{Theorem}[section]

\newtheorem{lemma}[theorem]{Lemma}
\newtheorem{prop}[theorem]{Proposition}

\theoremstyle{definition}
\newtheorem{remark}[theorem]{Remark}

\theoremstyle{definition}

\theoremstyle{definition}

\theoremstyle{definition}

\makeatletter
\def\dashint{\operatorname%
{\,\,\text{\bf-}\kern-.98em\DOTSI\intop\ilimits@\!\!}}
\makeatother

\def\\det{\text{det}}

\def\.5{\frac{1}{2}}

\def\cD{\mathcal{D}}

\newcommand{\RN}[1]{%
  \textup{\uppercase\expandafter{\romannumeral#1}}%
}

\newcommand{\Div}{\operatorname{div}}

\newcounter{marnote}

\makeatletter
\@namedef{subjclassname@2020}{%
  \textup{2020} Mathematics Subject Classification}
\makeatother

\begin{document}


\title[Perfect conductivity problem with $p$-Laplacian]{Gradient estimates for the $p$-Laplacian perfect conductivity problem with partially flat and $C^{1,\gamma}$ inclusions}

\author[H. Dong]{Hongjie Dong}
\address[H. Dong]{Division of Applied Mathematics, Brown University, 182 George Street, Providence, RI 02912, United States of America.}
\email{Hongjie\_Dong@brown.edu}
\thanks{H. Dong was partially supported by the NSF under agreement DMS-2350129.}
            
\author[L. Xu]{Longjuan Xu}
\address[L. Xu]{Academy for Multidisciplinary Studies, Capital Normal University, Beijing 100048, China.}
\email{longjuanxu@cnu.edu.cn}
\thanks{L. Xu was partially supported by NSF of China (12301141), and Beijing Municipal Education Commission Science and Technology Project (KM202410028001).}

\begin{abstract}
In this paper, we investigate the gradient estimates for solutions to the perfect conductivity problem with two closely spaced perfect conductors embedded in a homogeneous matrix, modeled by $p$-Laplacian elliptic equations. We first prove that the gradient of the solution remains bounded when the conductors possess partially ``flat" boundaries. This contrasts with the case involving strictly convex inclusions, where the gradient can blow up. Second, for conductors with $C^{1,\gamma}$ boundaries ($\gamma\in(0,1)$), we establish both upper and lower bounds on the gradient, with optimal blow-up rates. Furthermore, we provide precise asymptotic expansions in some special cases.
\end{abstract}

\maketitle

\section{Introduction and main results}
It is well known that, when two inclusions are in close proximity, the electric field may explode in the intervening region, potentially leading to material failure \cite{basl,k1963,k1993}. Over the past two decades, quantitative theoretical studies have sought to quantify the electric field in composite materials, modeled by 
\begin{align}\label{linear-eq}
\begin{cases} 
\Div(a_k(x) Du_\varepsilon) = 0 & \text{in } \cD, \\ 
u = \varphi & \text{on } \partial \cD, 
\end{cases}
\end{align}
where $\cD$ is a bounded $C^2$ domain in $\mathbb R^n$, $n\geq2$, 
\begin{align*}
a_k(x)=
\begin{cases}
k,&\quad x\in\cD_1^\varepsilon\cup\cD_2^\varepsilon,\\
1,&\quad x\in\cD\setminus\overline{\cD_1^\varepsilon\cup\cD_2^\varepsilon},
\end{cases}
\end{align*}
$k\in(0,1)\cup(1,\infty)$, $\cD_1^\varepsilon$ and $\cD_2^\varepsilon$ are two inclusions in $\cD$, and $\varepsilon:=\mbox{dist}(\partial\cD_1^\varepsilon,\partial\cD_2^\varepsilon)$. Inclusions are called perfect conductors when $k\rightarrow\infty$. It has been proved that $|Du_\varepsilon|$ will blow up as $\varepsilon\rightarrow0$ in the perfect conductivity problem, with blow-up rate $\frac{1}{\sqrt\varepsilon}$ when $n=2$, $\frac{1}{\varepsilon|\ln\varepsilon|}$ when $n=3$, and $\frac{1}{\varepsilon}$ when $n\geq4$. See, for instance,  \cite{ABV2015, ACKLY1999, ADKY2007,  AKLLL2007, AKL2005, BLY2009, BT2013, BV2000, DL2019, JK2023, KLY2015, KLY2013, KLY2014, KL2019, L2020, LY2009, MPM1988, P1989, Y2007} and the references therein.

The study of the gradient blow-up phenomena has been extended to nonlinear settings. A typical model for two-phase nonlinear composite materials involves a $p$-Laplacian equation defined on domains containing two convex inclusions. The behavior of the background is assumed to follow a power law:
\begin{equation*}
J=\sigma|E|^{p-2}E,\quad p>1.
\end{equation*}
This relation describes a range of physical phenomena, such as nonlinear dielectrics \cite{tw1994}, where $J$ and $E$ represent current and electric field, respectively, and the deformation theory of plasticity \cite{cs1997,i2008,s1993}, in which $J$ and $E$ denote stress and infinitesimal strain, respectively. In our context, the electric field $E$ is defined as the gradient of the electric potential and $\sigma$ represents conductivity. In the linear case, for instance, this potential corresponds to the solution of \eqref{linear-eq}.

In this paper, we study a composite material comprising a background medium embedded with two perfectly conducting inclusions. To state our results precisely, we first describe the mathematical formulation as follows:
Let $\cD$ be a bounded $C^2$ domain in $\mathbb R^n$, and let $\cD_1^0$ and $\cD_2^0$ be two $C^2$ (or $C^{1,\gamma}$ with $\gamma\in(0,1)$) open sets that $\mbox{diam}(\cD_i^0)>c>0$ and $\mbox{dist}(\cD_1^0\cup\cD_2^0,\partial\cD)>c>0$, and touch at the origin with the
inner normal direction of $\partial\cD_1^0$ being the positive $x_n$-axis. For $\varepsilon>0$, we translate $\cD_1^0$ and $\cD_2^0$ along the $x_n$-axis to obtain
\begin{equation*}
\cD_1^\varepsilon:=\cD_1^0+(0',\varepsilon/2)\quad\mbox{and}\quad\cD_2^\varepsilon:=\cD_2^0-(0',\varepsilon/2).
\end{equation*}
Here, we write $x=(x',x_n)$ with $x'\in\mathbb R^{n-1}$. Denote $\Omega^\varepsilon:=\cD\setminus\overline{\cD_1^\varepsilon\cup\cD_2^\varepsilon}$. Let us consider the perfect conductivity problem with $p$-Laplacian as follows:
\begin{align}\label{p-laplace}
\begin{cases}
-\Div(|Du_\varepsilon|^{p-2}Du_\varepsilon)=0&\quad\mbox{in}~\Omega^\varepsilon,\\
u_\varepsilon=U_i^\varepsilon&\quad\mbox{on}~\overline{\cD_i^\varepsilon},\\
\int_{\partial \cD_i^\varepsilon}|Du_\varepsilon|^{p-2}Du_\varepsilon\cdot\nu=0&\quad i=1,2,\\
u_\varepsilon=\varphi&\quad\mbox{on}~\partial \cD,
\end{cases}
\end{align}
where $\varphi\in C^2(\partial D)$ is given, $\nu=(\nu_1,\dots,\nu_n)$ is the unit outer normal vector on $\partial \cD_1^\varepsilon\cup\partial \cD_2^\varepsilon$, $U_1^\varepsilon$ and $U_2^\varepsilon$ are two constants which can be determined by the third condition in \eqref{p-laplace}. By the maximum principle, we have
\begin{equation}\label{bdd-phi}
\inf_{\partial \cD}\varphi\leq U_1^\varepsilon, U_2^\varepsilon\leq \sup_{\partial \cD}\varphi.
\end{equation}
See \cite{dyz2024}.
Here and throughout the paper, for $x\in\partial\cD_i^\varepsilon$, $i=1,2$, we use the notation
\begin{equation*}
Du_\varepsilon\cdot\nu(x):=\lim_{t\rightarrow 0^+}\frac{u_\varepsilon(x+t\nu(x))-u_\varepsilon(x)}{t}\quad\mbox{and}\quad Du_\varepsilon(x):=(Du_\varepsilon\cdot\nu(x))\nu(x).
\end{equation*}
As showed in \cite{dyz2024}, the solution $u_\varepsilon\in W^{1,p}(\cD)$ can be viewed as the unique function which has the minimal energy in appropriate function space:
\begin{align}\label{mini-sol}
\begin{cases}
I_p[u_\varepsilon]=\min_{v\in\mathcal{A}^\varepsilon}I_p[v],\\
I_p[v]:=\int_{\cD}|Dv|^{p},\quad v\in\mathcal{A}^\varepsilon,\\
\mathcal{A}^\varepsilon:=\{v\in W^{1,p}(\cD): Dv=0~\mbox{in}~\cD_1^\varepsilon\cup\cD_2^\varepsilon,~ v=\varphi~\mbox{on}~\partial\cD\}.
\end{cases}
\end{align}
By taking an arbitrary function $v\in W^{1,p}(\cD)$ such that $v=\varphi$ on $\partial\cD$, and $Dv=0$ in $\Omega\subset \cD$, where $\Omega$ is an open set containing $\cup_{0\leq\varepsilon\leq c_0}\overline{\cD_1^\varepsilon\cup\cD_2^\varepsilon}$ with $0<c_0<\mbox{dist}(\cD_1^0\cup\cD_2^0,\partial \cD)$. Then $v\in\mathcal{A}^\varepsilon$ and 
\begin{equation*}
\int_{\cD}|Du_{\varepsilon}|^{p}\leq\int_{\cD}|Dv|^{p}.
\end{equation*}
This in combination with Poincar\'{e}'s inequality and \eqref{bdd-phi} yields that $u_{\varepsilon}\in W^{1,p}(\cD)$ is bounded uniformly in $\varepsilon$.

In \cite{gn2012}, Gorb and Novikov studied the perfect conductivity problem for the $p$-Laplacian in the case $p>n$. Subsequently, Ciraolo and Sciammetta \cite{cs2019,cs20192} investigated the anisotropic perfect conductivity problem and extended the results in \cite{gn2012} to the range $1<p\leq n$. The results in \cite{cs2019,cs20192,gn2012} primarily focus on establishing upper and lower bounds for the gradient. Recently, a more precise characterization of the gradient was derived in \cite{dyz2024}  by capturing its leading order term in the asymptotics expansion.

We would like to mention that all the previous work deal with inclusions with smooth boundaries, say, $C^2$ boundaries. However, from a practical standpoint, one may be interested in understanding how the blow-up rate behaves when the smoothness of the inclusions is reduced as well as under what circumstances blow-up does not occur, that is, when material failure is avoided. In what follows, we will address the two scenarios separately.

\subsection{The case when \texorpdfstring{$\cD_1^{\varepsilon}$}~~and \texorpdfstring{$\cD_2^{\varepsilon}$}~~have partially ``flat" boundaries}

Let $p\in(1,+\infty)$. We assume that $\cD_1^0$ and $\cD_2^0$ have a portion of common boundary on $\{x_n=0\}$, that is, $\partial\cD_1^0\cap\partial\cD_2^0=\Sigma'\subset\mathbb R^{n-1}$ and $|\Sigma'|\neq 0$. Here, $\Sigma'$ is a convex domain with $C^2$ boundary and contains a $(n-1)$-dimensional ball whose center of mass is at the origin, and $|\Sigma'|$ is the area of $\Sigma'$. In this case, we shall show that the minimizing problem \eqref{mini-sol} with $\varepsilon=0$ is equivalent to
\begin{align}\label{varep=0}
\begin{cases}
-\Div(|Du_0|^{p-2}Du_0)=0&\quad\mbox{in}~\Omega^0,\\
u_0=U_0&\quad\mbox{on}~\overline{\cD_1^0}\cup\overline{\cD_2^0},\\
\int_{\partial \cD_1^0\cup\partial\cD_2^0}|Du_0|^{p-2}Du_0\cdot\nu=0,\\
u_0=\varphi&\quad\mbox{on}~\partial \cD,
\end{cases}
\end{align}
where $U_0$ is a constant; see Theorem \ref{equiv-u0} below. The flux along $\partial\cD_1^0$ is defined as follows:
\begin{equation}\label{def-F}
\mathcal{F}:=\int_{\partial \cD_1^0}|Du_0|^{p-2}Du_0\cdot\nu.  
\end{equation}

We denote by $\Gamma_+^\varepsilon$ and $\Gamma_-^\varepsilon$ the part of $\partial\cD_1^\varepsilon$ and $\partial\cD_2^\varepsilon$ near the origin, which can be respectively represented by the graphs of two $C^2$ functions in terms of $x'$:
\begin{equation*}
\Gamma_+^\varepsilon=\left\{x_n=\frac{\varepsilon}{2}+h_1(x'),~|x'|<1\right\},\quad\Gamma_-^\varepsilon=\left\{x_n=-\frac{\varepsilon}{2}+h_2(x'),~|x'|<1\right\},
\end{equation*}
where $h_1$ and $h_2$ are two $C^2$ functions on $B'_{2R}$ satisfying 
\begin{equation}\label{assump-h}
\begin{aligned}
&h_1(x')=h_2(x')=0,\quad x'\in\Sigma',\\
&\|h_1\|_{C^2(B_{2R}')}+\|h_2\|_{C^2(B_{2R}')}\leq c_1,\\
&|\nabla_{x'}h_1(x')|,\quad|\nabla_{x'}h_2(x')|\leq c_1d(x'),\quad x'\in B_{2R}'\setminus\overline{\Sigma'},\\
&c_2d(x')^2\leq h_1(x')-h_2(x')\leq c_1d(x')^2,\quad x'\in B_{2R}'\setminus\overline{\Sigma'},
\end{aligned}
\end{equation}
where $R>0$ is a constant independently of $\varepsilon$,  $c_i$ with $i=1,2$ are some positive constants, and 
\begin{equation}\label{def-dist}
d(x'):=\mbox{dist}(x',\Sigma').
\end{equation}
For $x_0\in\Omega^\varepsilon$ and $0<r<1-|x'_0|$, we denote the narrow region between $\cD_1^\varepsilon$ and $\cD_2^\varepsilon$ by
\begin{equation}\label{def-Omega}
\Omega_{r}^\varepsilon(x_0):=\left\{(x',x_n)\in\Omega^\varepsilon: -\frac{\varepsilon}{2}+h_2(x')<x_n<\frac{\varepsilon}{2}+h_1(x'), ~|x'-x'_0|<r\right\}.
\end{equation}
When $x_0=0$, we set $\Omega_{r}^\varepsilon:=\Omega_{r}^\varepsilon(0)$. Denote 
\begin{equation}\label{def-delta}
\delta(x'):=\varepsilon+h_1(x')-h_2(x'),
\end{equation}
and
\begin{equation*}
\Gamma_{+,r}^\varepsilon:=\Gamma_{+}^\varepsilon\cap\overline{\Omega_r^\varepsilon},\quad \Gamma_{-,r}^\varepsilon:=\Gamma_{-}^\varepsilon\cap\overline{\Omega_r^\varepsilon}.
\end{equation*}

Define
\begin{align}\label{def-Theta}
\Theta(\varepsilon;p):=
\frac{\varepsilon}{|\Sigma'|^{\frac{1}{p-1}}}.
\end{align}
In Lemma \ref{lem-Theta}, we shall prove that for any $r\in(0,1)$,
$$\lim_{\varepsilon\rightarrow 0}\int_{\{d(x')<r\}}\left(\frac{\Theta(\varepsilon;p)}{\delta(x')}\right)^{p-1}\ dx'=1,$$ which is a key ingredient in the proof of our desired result as given below.

\begin{theorem}\label{thm-flat}
Let $h_1$, $h_2$ be two $C^2$ functions satisfying \eqref{assump-h}, $p>1$, $n\geq 2$, and let $u_\varepsilon\in W^{1,p}(\cD)$ be a weak solution of \eqref{p-laplace}. Then for small $\varepsilon\in(0,100^{-2})$, $|\Sigma'|>0$, and $x\in\Omega_{R}^\varepsilon$, we have 
\begin{align*}
Du_{\varepsilon}=\left(0',\frac{\Theta(\varepsilon;p)}{\delta(x')}\big(\text{sgn}(\mathcal{F})|\mathcal{F}|^{\frac{1}{p-1}}+f_1(\varepsilon)\big)\right)
+{\bf f}_1(x,\varepsilon),
\end{align*}
where $f_{1}: \mathbb R\rightarrow\mathbb R$ is a function of $\varepsilon$ and ${\bf f}_{1}: \mathbb R^n\times\mathbb R\rightarrow\mathbb R^n$ is a function of $x$ and $\varepsilon$, such that
$$\lim_{\varepsilon\rightarrow0}f_1(\varepsilon)=0,$$
$$|{\bf f}_1(x,\varepsilon)|\leq C_1\left(\frac{\Theta(\varepsilon;p)}{\delta(x')^{1-\beta/2}}\big(|\mathcal{F}|^{\frac{1}{p-1}}+|f_1(\varepsilon)|\big)+e^{-\frac{C_2}{\sqrt\varepsilon+d(x')}}\|\varphi\|_{L^\infty(\partial\cD)}\right),$$
$\delta(x')$ and $\mathcal{F}$ are defined in \eqref{def-delta} and \eqref{def-F}, respectively, 
$\beta\in(0,1)$, and $C_1, C_2>0$ are constants depending on $n,p,c_1$, and $c_2$.
\end{theorem}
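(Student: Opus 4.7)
The overall strategy is to compare $u_\varepsilon$ in the narrow neck $\Omega_R^\varepsilon$ with a one-dimensional linear interpolation between $U_2^\varepsilon$ on $\Gamma_-^\varepsilon$ and $U_1^\varepsilon$ on $\Gamma_+^\varepsilon$, and then to pin down the magnitude of $U_1^\varepsilon - U_2^\varepsilon$ using the vanishing-flux condition together with Theorem \ref{equiv-u0} and Lemma \ref{lem-Theta}. Concretely, I would set
\begin{equation*}
\bar u_\varepsilon(x) := U_2^\varepsilon + \frac{x_n + \varepsilon/2 - h_2(x')}{\delta(x')}\bigl(U_1^\varepsilon - U_2^\varepsilon\bigr)
\end{equation*}
on $\Omega_R^\varepsilon$. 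This interpolant equals $U_i^\varepsilon$ on $\Gamma_{\pm}^\varepsilon$, has dominant derivative $\partial_{x_n}\bar u_\varepsilon = (U_1^\varepsilon - U_2^\varepsilon)/\delta(x')$, and by \eqref{assump-h} satisfies $|\nabla_{x'} \bar u_\varepsilon| \leq C\,|U_1^\varepsilon - U_2^\varepsilon|\,d(x')/\delta(x')$, so it already carries the conjectured leading behaviour of $Du_\varepsilon$.

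\textbf{Identifying the jump.} To pin down $U_1^\varepsilon - U_2^\varepsilon$, I would study the cross-sectional flux
\begin{equation*}
\mathcal F_\varepsilon := \int_{\{x_n=0\}\cap\Omega_R^\varepsilon} |Du_\varepsilon|^{p-2}\,\partial_{x_n} u_\varepsilon\, dx'.
\end{equation*}
Combining the vanishing-flux condition on $\partial\cD_1^\varepsilon$ with the divergence theorem shows that $\mathcal F_\varepsilon$ is, up to a small boundary error, independent of the chosen cross-section in the neck, while Theorem \ref{equiv-u0} and the convergence $u_\varepsilon\to u_0$ yield $\mathcal F_\varepsilon \to \mathcal F$ as $\varepsilon\to 0^+$. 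Substituting $\bar u_\varepsilon$ for $u_\varepsilon$ into the flux integral gives
\begin{equation*}
\mathcal F_\varepsilon \approx \operatorname{sgn}(U_1^\varepsilon-U_2^\varepsilon)\int_{\{d(x')<R\}}\left(\frac{|U_1^\varepsilon-U_2^\varepsilon|}{\delta(x')}\right)^{p-1} dx',
\end{equation*}
which, via Lemma \ref{lem-Theta}, can be solved for the jump to obtain
\begin{equation*}
U_1^\varepsilon-U_2^\varepsilon = \Theta(\varepsilon;p)\bigl(\operatorname{sgn}(\mathcal F)|\mathcal F|^{\frac{1}{p-1}} + f_1(\varepsilon)\bigr),\qquad f_1(\varepsilon)\to 0.
\end{equation*}
Plugging this back into $\partial_{x_n}\bar u_\varepsilon$ produces the stated leading term.

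\textbf{Controlling $\mathbf f_1$.} The two contributions to $\mathbf f_1$ come from different mechanisms. The polynomial term $\Theta/\delta^{1-\beta/2}$ is the price of replacing $u_\varepsilon$ by the $1$-D interpolant: after rescaling the thin cylinder $\Omega_r^\varepsilon(x_0)$ to a unit cylinder via $y'=(x'-x_0')/\delta(x_0')$ and $y_n=(x_n-h_2(x'))/\delta(x')$, the rescaled difference $u_\varepsilon-\bar u_\varepsilon$ satisfies a nondegenerate $p$-Laplace-type equation on a fixed domain; an energy bound coming from the minimization characterization \eqref{mini-sol}, combined with Tolksdorf--DiBenedetto interior $C^{1,\beta}$ regularity, yields a gain of $\delta(x')^{\beta/2}$ over the linear interpolation. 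The exponential factor $e^{-C_2/(\sqrt\varepsilon+d(x'))}\|\varphi\|_{L^\infty}$ captures the influence of the outer data $\varphi$ and is obtained by a geometric Caccioppoli iteration on roughly $1/(\sqrt\varepsilon+d(x'))$ concentric thin shells, each of which shrinks the $L^\infty$-deviation of $u_\varepsilon$ from $\tfrac12(U_1^\varepsilon+U_2^\varepsilon)$ by a fixed multiplicative factor.

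\textbf{Main obstacle.} The hardest step is the comparison estimate: since the $p$-Laplacian is nonlinear and degenerate, one cannot simply invoke a linear maximum principle for $u_\varepsilon-\bar u_\varepsilon$, so one must first perform the intrinsic rescaling that flattens the thin geometry and only then apply nonlinear regularity. A secondary technical difficulty is justifying the passage $\mathcal F_\varepsilon\to\mathcal F$ in the absence of regularity of $u_0$ across the flat touching set $\Sigma'$, which is precisely the role played by Theorem \ref{equiv-u0}.
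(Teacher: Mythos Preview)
Your proposal is essentially the paper's proof: the linear interpolant $\bar u_\varepsilon$, the flux identification leading to $U_1^\varepsilon-U_2^\varepsilon=\Theta(\varepsilon;p)\bigl(\operatorname{sgn}(\mathcal F)|\mathcal F|^{1/(p-1)}+o(1)\bigr)$ via Lemma~\ref{lem-Theta}, and the two-part error bound (rescaling plus $C^{1,\beta}$ regularity for the $\delta^{\beta/2}$ gain; Caccioppoli iteration across $\sim 1/(\sqrt\varepsilon+d(x'))$ shells for the exponential term) all match the paper's Section~\ref{sec-thm1}, which packages these pieces as Proposition~\ref{prop-converg-F}, Theorem~\ref{thm-U12}, and the preliminary Proposition giving \eqref{asym-Du}.

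Two small discrepancies worth noting. First, the paper computes the flux on the boundary patch $\widetilde\Gamma_{-,r}^\varepsilon=\{x_n=-\varepsilon/2+h_2(x'),\,d(x')<r\}$ rather than on the horizontal cross-section $\{x_n=0\}$; both choices are equivalent modulo a lateral term that vanishes as $\varepsilon\to 0$, but the paper then sends $r\to 0$ (after $\varepsilon\to 0$) to kill the residual $O(e^{-C/r})$ error in $\mathcal F_\varepsilon-\mathcal F$, a step you leave implicit. Second, your description of the exponential mechanism as iterating on ``the $L^\infty$-deviation of $u_\varepsilon$ from $\tfrac12(U_1^\varepsilon+U_2^\varepsilon)$'' is not quite right: that function does \emph{not} vanish on $\Gamma_\pm^\varepsilon$. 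The paper's Lemma~\ref{lem-Dv} is stated for a $p$-harmonic $v$ with $v=0$ on both caps, and in the application one must arrange the decomposition so that the relevant piece genuinely has zero trace there; this is handled inside the (cited) proof of \eqref{asym-Du}. This is a detail of packaging rather than a gap in the strategy.
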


Since $|\Sigma'|>0$, one can infer from the result in Theorem \ref{thm-flat} that $|Du_\varepsilon|$ is bounded and thus, there is no blowup occur. This reveals the mathematical mechanism behind the absence of field concentration phenomena as the curvature of the inclusion is zero locally, which is in accordance with the linear case \cite[Theorem 1.4]{clx2021}.

\subsection{The case when \texorpdfstring{$\cD_1^{\varepsilon}$}~~ and \texorpdfstring{$\cD_2^{\varepsilon}$}~~ have \texorpdfstring{$C^{1,\gamma}$} ~~boundaries}

In this case, $h_1$ and $h_2$ are two bounded $C^{1,\gamma}$ functions and \eqref{assump-h} is replaced by
\begin{equation}\label{assump-h-gamma}
\begin{aligned}
&h_1(0')=h_2(0')=0,\quad \nabla_{x'}h_1(0')=\nabla_{x'}h_2(0')=0,\\
&\|h_1\|_{C^{1,\gamma}(B_{2R}')}+\|h_2\|_{C^{1,\gamma}(B_{2R}')}\leq c_3,\\
&|\nabla_{x'}h_1(x')|,~ |\nabla_{x'}h_2(x')|\leq c_3|x'|^{\gamma},\quad 0<|x'|<1,\\
&c_4|x'|^{1+\gamma}\leq h_1(x')-h_2(x'), \quad 0<|x'|<1,
\end{aligned}
\end{equation}
where $c_3$ and $c_4$ are positive constants independently of $\varepsilon$. 
The minimizing problem \eqref{mini-sol} with $\varepsilon=0$ is equivalent to 
\begin{align*}
\begin{cases}
-\Div(|Du_0|^{p-2}Du_0)=0&\quad\mbox{in}~\Omega^0,\\
u_0=U_0&\quad\mbox{on}~\overline{\cD_1^0}\cup\overline{\cD_2^0},\\
\int_{\partial \cD_1^0\cup\partial\cD_2^0}|Du_0|^{p-2}Du_0\cdot\nu=0,\\
u_0=\varphi&\quad\mbox{on}~\partial \cD
\end{cases}
\end{align*}
for a constant $U_0$ when $p\geq\frac{n+\gamma}{1+\gamma}$, and is equivalent to
\begin{align}\label{varep=0-2}
\begin{cases}
-\Div(|Du_0|^{p-2}Du_0)=0&\quad\mbox{in}~\Omega^0,\\
u_0=U_i&\quad\mbox{on}~\overline{\cD_i^0}\setminus\{0\},\\
\int_{\partial \cD_i^0}|Du_0|^{p-2}Du_0\cdot\nu=0&\quad i=1,2,\\
u_0=\varphi&\quad\mbox{on}~\partial \cD
\end{cases}
\end{align}
for constants $U_1$ and $U_2$ when $1<p<\frac{n+\gamma}{1+\gamma}$. We refer the reader to \cite[Theorem 2.4]{dyz2024} for the proof of the equivalence when $\partial\cD_i^{\varepsilon}\in C^{2}$, $i=1,2$. The argument there can be readily applied to the case when $\partial\cD_i^{\varepsilon}\in C^{1,\gamma}$, $i=1,2$.

By using \eqref{def-delta} and \eqref{assump-h-gamma}, we have 
\begin{equation}\label{delta-asym}
\delta(x')=\varepsilon+a|x'|^{1+\gamma},
\end{equation}
where $a$ is a function of $x'$ such that
\begin{equation}\label{est-a}
\frac{1}{c_0}\leq a\leq c_0
\end{equation}
for some constant $c_0>1$ depending on $c_3$ and $c_4$. 
Denote 
\begin{align}\label{def-K0}
K=
\begin{cases}
\frac{1+\gamma}{2\pi^{\frac{n-1}{2}}}\cdot\frac{\Gamma(\frac{n-1}{2})\Gamma(p-1)}{\Gamma(\frac{n-1}{1+\gamma})\Gamma(p-\frac{n+\gamma}{1+\gamma})},&\quad p>\frac{n+\gamma}{1+\gamma},\\
\frac{1+\gamma}{2\pi^{\frac{n-1}{2}}}\Gamma(\frac{n-1}{2}),&\quad p=\frac{n+\gamma}{1+\gamma},
\end{cases}
\end{align}
where $\Gamma(z):=\int_{0}^{\infty}t^{z-1}e^{-t}\ dz$ is the gamma function for $z>0$,
and
\begin{align}\label{def-Theta-gamma}
\Theta(\varepsilon;p,\gamma):=
\begin{cases}
\varepsilon^{1-\frac{n-1}{(1+\gamma)(p-1)}},    &p>\frac{n+\gamma}{1+\gamma},\\
|\ln\varepsilon|^{-\frac{1}{p-1}}, & p=\frac{n+\gamma}{1+\gamma}.
\end{cases}
\end{align}

\begin{theorem}\label{thm-gamma}
Let $h_1$ and $h_2$ be two bounded $C^{1,\gamma}$ functions satisfying \eqref{assump-h-gamma}, $p>1$, $n\geq 2$, and let $u_\varepsilon\in W^{1,p}(\cD)$ be a weak solution of \eqref{p-laplace}. Then for sufficiently small $\varepsilon>0$ and $x\in\Omega_{R}^\varepsilon$, the following holds:

(1) When $p\geq \frac{n+\gamma}{1+\gamma}$, then we have the upper bound
\begin{align*}
|Du_\varepsilon(x)|\leq C(K|\mathcal{F}|)^{\frac{1}{p-1}}\frac{\Theta(\varepsilon;p,\gamma)}{\delta(x')}(1+\delta(x')^{\beta/2})+C_1e^{-\frac{C_2}{\varepsilon^{\gamma/(1+\gamma)}+|x'|^\gamma}}\|\varphi\|_{L^\infty(\partial\cD)},
\end{align*}
where $C,C_1>0$ are constants depending on $n,p,\gamma,c_3$, and $c_4$, and $C_2>0$ depends on $n,p,c_3$, and $c_4$; For any $x=(0',x_n)\in\Omega_R^\varepsilon$, we also have the lower bound 
\begin{equation*}
|Du_\varepsilon(0',x_n)|\geq\frac{1}{2C_0}(K|\mathcal{F}|)^{\frac{1}{p-1}}\frac{\Theta(\varepsilon;p,\gamma)}{\varepsilon}-C_1e^{-\frac{C_2}{\varepsilon^{\gamma/(1+\gamma)}}}\|\varphi\|_{L^\infty(\partial\cD)},
\end{equation*}
where $C_0>1$ and $C_1>0$ are constants depending on $n,p,\gamma,c_3$, and $c_4$, and $C_2>0$ depends on $n,p,c_3$, and $c_4$;

(2) When $1<p<\frac{n+\gamma}{1+\gamma}$, then we have 
\begin{align*}
Du_\varepsilon(x)=\big(0',\delta(x')^{-1}(U_1-U_2+f_{2}(\varepsilon))\big)+{\bf f}_{2}(x,\varepsilon),
\end{align*}
where $U_1$ and $U_2$ are defined in \eqref{varep=0-2}, $f_{2}: \mathbb R\rightarrow\mathbb R$ is a function of $\varepsilon$ and ${\bf f}_{2}: \mathbb R^n\times\mathbb R\rightarrow\mathbb R^n$ is a function of $x$ and $\varepsilon$ satisfying 
\begin{equation*}
\lim_{\varepsilon\rightarrow0}f_{2}(\varepsilon)=0,
\end{equation*}
and 
\begin{align*}
|{\bf f}_{2}(x,\varepsilon)|\leq C\left(\delta(x')^{\beta/2-1}(|U_1-U_2|+|f_{2}(\varepsilon)|)\right)+C_1e^{-\frac{C_2}{\varepsilon^{\gamma/(1+\gamma)}+|x'|^\gamma}}\|\varphi\|_{L^\infty(\partial\cD)}.
\end{align*}
Here, $\delta(x')$ and $\mathcal{F}$ are defined in \eqref{def-delta} and \eqref{def-F}, respectively, 
$\beta\in(0,1)$, $C_1>0$ is a constant depending on $n,p,\gamma,c_3$, and $c_4$, and $C,C_2>0$ depend on $n,p,c_3$, and $c_4$.
\end{theorem}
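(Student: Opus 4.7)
The plan is to reduce the problem in the narrow gap $\Omega_R^\varepsilon$ to a one–dimensional linear interpolation between the constant values $U_1^\varepsilon$ and $U_2^\varepsilon$ that $u_\varepsilon$ takes on the two inclusions, and then to determine the amplitude $U_1^\varepsilon-U_2^\varepsilon$ via the zero–flux identity $\int_{\partial\cD_1^\varepsilon}|Du_\varepsilon|^{p-2}Du_\varepsilon\cdot\nu=0$. First I would introduce the competitor
$$
\bar u(x):=U_2^\varepsilon+\frac{x_n+\varepsilon/2-h_2(x')}{\delta(x')}\bigl(U_1^\varepsilon-U_2^\varepsilon\bigr),\qquad x\in\Omega_R^\varepsilon,
$$
so that $\bar u=U_i^\varepsilon$ on $\Gamma_{\pm,R}^\varepsilon$ and $\partial_n\bar u=(U_1^\varepsilon-U_2^\varepsilon)/\delta(x')$. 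Outside $\Omega_R^\varepsilon$, $u_\varepsilon$ is uniformly $W^{1,p}$–bounded and standard $p$–Laplacian Lipschitz theory controls the exponentially decaying term $e^{-C_2/(\varepsilon^{\gamma/(1+\gamma)}+|x'|^\gamma)}\|\varphi\|_{L^\infty}$ via a barrier argument over shrinking annular cylinders away from the origin.

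\textbf{Pointwise replacement in the gap.} The central step is to show $Du_\varepsilon\approx(0',\partial_n\bar u)$ pointwise in $\Omega_R^\varepsilon$ with error $\delta(x')^{\beta/2-1}$. I would set $w:=u_\varepsilon-\bar u$, work with the linearized $p$–Laplacian equation for $w$, and iterate Caccioppoli estimates following the blueprint of \cite{dyz2024}. For $x_0'\in B'_R$, one localizes on a cylinder centered at $(x_0',0)$ whose transverse size matches the gap width $\delta(x_0')$, rescales it to a unit cylinder in which the equation becomes a small perturbation of the constant–coefficient $p$–Laplacian, and applies Hölder estimates for $Du_\varepsilon$ after rescaling. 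The $\delta^{\beta/2}$ gain is exactly the Hölder gain on the rescaled cylinder.

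\textbf{Extraction of $U_1^\varepsilon-U_2^\varepsilon$.} Insert the pointwise replacement into $\int_{\Gamma_{+,R}^\varepsilon}|Du_\varepsilon|^{p-2}Du_\varepsilon\cdot\nu$. Using \eqref{delta-asym}–\eqref{est-a} together with the rescaling $y=\varepsilon^{-1/(1+\gamma)}x'$, polar coordinates in $\mathbb R^{n-1}$, and the substitution $s=r^{1+\gamma}$, a Beta–function computation gives
$$
\int_{B'_R}\bigl(\varepsilon+a|x'|^{1+\gamma}\bigr)^{-(p-1)}\,dx'=\begin{cases}K^{-1}\varepsilon^{(n-1)/(1+\gamma)-(p-1)}(1+o(1)),& p>\frac{n+\gamma}{1+\gamma},\\[2pt] K^{-1}|\ln\varepsilon|(1+o(1)),& p=\frac{n+\gamma}{1+\gamma},\end{cases}
$$
with the constant $K$ of \eqref{def-K0}. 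The contribution from $\partial\cD_1^\varepsilon\setminus\Gamma_{+,R}^\varepsilon$ converges to $\mathcal{F}$ by compactness of $u_\varepsilon$ in $W^{1,p}$ away from the origin and the equivalence with \eqref{varep=0}. Balancing the two gives, in the supercritical and critical regimes,
$$
U_1^\varepsilon-U_2^\varepsilon=\text{sgn}(\mathcal{F})(K|\mathcal{F}|)^{1/(p-1)}\,\Theta(\varepsilon;p,\gamma)\,(1+o(1)),
$$
which, combined with the pointwise replacement, yields the upper bound in part~(1); the lower bound at $x=(0',x_n)$ follows because $\delta(0')=\varepsilon$ makes the correction $\delta^{\beta/2-1}$ strictly of lower order than the main term $\Theta(\varepsilon;p,\gamma)/\varepsilon$. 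In the subcritical regime the integral above is already convergent for $\varepsilon=0$, so \eqref{varep=0-2} keeps two distinct constants $U_1,U_2$; compactness gives $U_i^\varepsilon\to U_i$, and setting $f_2(\varepsilon):=(U_1^\varepsilon-U_2^\varepsilon)-(U_1-U_2)$ produces the asymptotic expansion of part~(2).

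\textbf{Main obstacle.} The technical crux is the sharp pointwise estimate on $Dw$. Because the $p$–Laplacian is degenerate/singular wherever $|Du_\varepsilon|$ is small or large, the coefficients of the linearized equation degenerate with $\delta(x')$, and to close the Caccioppoli iteration one needs an a priori bound on $|Du_\varepsilon|$ already of the correct order $\Theta(\varepsilon;p,\gamma)/\delta(x')$. This forces a bootstrap starting from the crude bound of \cite{cs2019,cs20192,gn2012} that is iterated to the sharp scale, with the $C^{1,\gamma}$ regularity of $\partial\cD_i^\varepsilon$ controlling the perturbative terms produced when flattening the boundary. A secondary difficulty is the passage to the limit in the flux of $u_\varepsilon$ on $\partial\cD_1^\varepsilon\setminus\Gamma_{+,R}^\varepsilon$, which needs locally uniform convergence of $u_\varepsilon\to u_0$ away from the origin despite the possibly singular behavior of $Du_0$ at the contact point.
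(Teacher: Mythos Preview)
Your overall scheme---pointwise replacement by the linear interpolation $\bar u$, then extraction of $U_1^\varepsilon-U_2^\varepsilon$ via the flux identity---is exactly the route the paper takes, and your identification of the correction terms and the subcritical case is in line with theirs.

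There is, however, one overclaim that matters. Your Beta-function computation asserts
\[
\int_{B'_R}\bigl(\varepsilon+a|x'|^{1+\gamma}\bigr)^{-(p-1)}\,dx'=K^{-1}\Theta(\varepsilon;p,\gamma)^{-(p-1)}\bigl(1+o(1)\bigr),
\]
with an exact constant $K$. But under \eqref{delta-asym}--\eqref{est-a} the coefficient $a=a(x')$ is only known to satisfy $c_0^{-1}\le a(x')\le c_0$; it need not be constant, nor radially homogeneous, nor even continuous at the origin. After your rescaling $y=\varepsilon^{-1/(1+\gamma)}x'$ the integrand contains $a(\varepsilon^{1/(1+\gamma)}y)$, which in general has no limit as $\varepsilon\to 0$, so the $(1+o(1))$ is not justified. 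The paper (Lemma~\ref{lem-Theta-gamma}) handles this correctly by sandwiching $a$ between $c_0^{-1}$ and $c_0$, obtaining only
\[
K^{-1}c_0^{-\frac{n-1}{1+\gamma}}\le\liminf\ \le\ \limsup\ \le K^{-1}c_0^{\frac{n-1}{1+\gamma}},
\]
and consequently (Theorem~\ref{thm-U12-gamma}) only two-sided bounds on $(U_1^\varepsilon-U_2^\varepsilon)/\Theta(\varepsilon;p,\gamma)$. This is exactly why part~(1) of the theorem is stated with unspecified constants $C,C_0$ rather than as an asymptotic formula; the sharp asymptotic you wrote is available only in the special cases $a=a(x'/|x'|)$ or $a$ constant, which the paper treats separately in Remarks~\ref{rmk-U1-U2-x'}--\ref{rmk-U1U2-11}. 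Your argument is easily repaired by replacing the claimed limit with the two-sided bounds, after which everything else goes through as you describe.
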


In certain special cases, we can derive more precise asymptotic formulae. See Remarks \ref{rmk-U1-U2-x'} and \ref{rmk-U1U2-11} below.

\begin{remark}
When $\cD_1^{\varepsilon}$ and $\cD_2^{\varepsilon}$ are two bounded $C^{1,\gamma}$ domains and have partially ``flat" boundaries, the arguments in the proof of Theorems \ref{thm-flat} and \ref{thm-gamma} remain valid. Specifically, 
let $h_1$, $h_2$ be two bounded $C^{1,\gamma}$ functions satisfying \eqref{assump-h}, $p>1$, $n\geq 2$, and let $u_\varepsilon\in W^{1,p}(\cD)$ be a weak solution of \eqref{p-laplace}. Then for small $\varepsilon\in(0,100^{-2})$, $|\Sigma'|>0$, and $x\in\Omega_{R}^\varepsilon$, we have 
\begin{align*}
Du_{\varepsilon}=\left(0',\frac{\Theta(\varepsilon;p)}{\delta(x')}\big(\text{sgn}(\mathcal{F})|\mathcal{F}|^{\frac{1}{p-1}}+\tilde f(\varepsilon)\big)\right)
+\tilde {\bf f}(x,\varepsilon),
\end{align*}
where $\tilde f: \mathbb R\rightarrow\mathbb R$ is a function of $\varepsilon$ and $\tilde{\bf f}: \mathbb R^n\times\mathbb R\rightarrow\mathbb R^n$ is a function of $x$ and $\varepsilon$, such that
$$\lim_{\varepsilon\rightarrow0}\tilde f(\varepsilon)=0,$$
$$|\tilde{\bf f}(x,\varepsilon)|\leq C_1\left(\frac{\Theta(\varepsilon;p)}{\delta(x')^{1-\beta/2}}\big(|\mathcal{F}|^{\frac{1}{p-1}}+|\tilde f(\varepsilon)|\big)+e^{-\frac{C_2}{\varepsilon^{\gamma/(1+\gamma)}+d(x')^{\gamma}}}\|\varphi\|_{L^\infty(\partial\cD)}\right),$$
$\delta(x')$ and $\mathcal{F}$ are defined in \eqref{def-delta} and \eqref{def-F}, respectively,
$\beta\in(0,1)$, and $C_1, C_2>0$ are constants independently of $\varepsilon$.    
\end{remark}

\begin{remark}
By comparing the results in \cite[Theorem 1.1]{dyz2024}, we list the blow-up rates for the case when the domains with $C^{1,\gamma}$ and $C^2$ boundaries in the table below: 
\begin{center}
\begin{tabular}{|c|c|c|}
\hline
 & $C^{1,\gamma}$& $C^2$ \\
 \hline
 $1<p<\frac{n+1}{2}$ &$\varepsilon^{-1}$& $\varepsilon^{-1}$\\
 \hline
 $p=\frac{n+1}{2}$ &$\varepsilon^{-1}$&  $\varepsilon^{-1}|\ln\varepsilon|^{-\frac{1}{p-1}}$\\
 \hline
$\frac{n+1}{2}<p<\frac{n+\gamma}{1+\gamma}$ &$\varepsilon^{-1}$&$\varepsilon^{-\frac{n-1}{2(p-1)}}$\\
 \hline
 $p=\frac{n+\gamma}{1+\gamma}$&  $\varepsilon^{-1}|\ln\varepsilon|^{-\frac{1}{p-1}}$&$\varepsilon^{-\frac{n-1}{2(p-1)}}$\\
 \hline
$p>\frac{n+\gamma}{1+\gamma}$&$\varepsilon^{-\frac{n-1}{(1+\gamma)(p-1)}}$&$\varepsilon^{-\frac{n-1}{2(p-1)}}$\\
 \hline
\end{tabular} 
\end{center}
The table above shows that, except when $1<p<\frac{n+1}{2}$, the blow-up rates of the gradient with $C^{1,\gamma}$ boundaries exceed those for $C^{2}$ boundaries. This suggests that reduced regularity of the inclusion may result in an amplified blow-up rate. Indeed, for the linear case, Kang and Yun \cite{KY2019} demonstrated that the blow-up rate in the presence of a bow-tie structure is significantly higher than for inclusions with smooth boundaries in two dimensions. 
\end{remark}

\subsection{Main ingredients of the proofs}
The proofs of Theorems \ref{thm-flat} and \ref{thm-gamma} are inspired by the ideas in \cite{dyz2024}. However, new techniques and substantial modifications are required to precisely capture the influence of partially ``flat" and $C^{1,\gamma}$ boundaries on gradient estimates. 

In the case when the boundaries are partially ``flat", we establish the equivalence between the minimizing problem \eqref{mini-sol} with $\varepsilon=0$ and the problem \eqref{varep=0} for all $p>1$--unlike \cite[Theorem 2.4]{dyz2024}, which requires $p\geq(n+1)/2$. This allows us to prove the convergence of 
$$\int_{\widetilde\Gamma_{-,r}^\varepsilon}|Du_\varepsilon|^{p-2}Du_\varepsilon\cdot\nu$$ 
as $\varepsilon\rightarrow0_+$ for any $r>0$ and $p>1$, where $\widetilde\Gamma_{-,r}^\varepsilon$ is defined in \eqref{def-gamma=dist}. It is worth noting that the presence of partially ``flat" boundaries requires the use of $d(x')<r$ in the definition, rather than $|x'|<r$, which distinguishes our case from those in \cite[Proposition 4.2]{dyz2024} and \cite[Proposition 2.1]{gn2012}. Furthermore, we introduce a new quantity $\Theta(\varepsilon;p)$ defined in \eqref{def-Theta}, arising from the proof of Lemma \ref{lem-Theta}. By combining these results, we derive the estimates of $U_1^\varepsilon-U_2^\varepsilon$ in Theorem \ref{thm-U12}, where the influence of $\mathcal{F}$ becomes apparent. 
For $C^{1,\gamma}$ boundaries case, we also introduce a new quantity $\Theta(\varepsilon;p,\gamma)$ in \eqref{def-Theta-gamma} and establish the upper and lower bounds of $Du_\varepsilon$.

The remainder of the paper is organized as follows. In Section \ref{sec-pre}, we give some preliminary estimates which will be used in the proof of the main results. The proofs of Theorems \ref{thm-flat} and \ref{thm-gamma} are given in Sections \ref{sec-thm1} and \ref{sec-thm2}, respectively.

\section{Preliminaries}\label{sec-pre}

In this section, we present some preliminary results. 

\begin{lemma}\label{lem-Dv}
Let $p>1$, $n\geq 2$, $\varepsilon\in[0,100^{-2})$, and $v\in W^{1,p}(\Omega_1^\varepsilon)$ be a weak solution of 
\begin{align}\label{eq-v-p}
\begin{cases}
-\Div(|Dv|^{p-2}Dv)=0&\quad\mbox{in}~\Omega_1^\varepsilon,\\
v=0&\quad\mbox{on}~\Gamma_{\pm}^\varepsilon.
\end{cases}
\end{align}
For any $x\in\overline{\Omega_{1/3}^\varepsilon}$, the following assertions hold:

(i) when $h_1$ and $h_2$ are two $C^2$ functions satisfying \eqref{assump-h} and \eqref{sigma'} holds for some constant $R_0\in (0,1/3)$, we have
\begin{equation*}
|v(x)|+|Dv(x)|\leq C_1e^{-\frac{C_2}{\sqrt\varepsilon+d(x')}}\|v\|_{L^p(\Omega_{1}^\varepsilon\setminus\Omega_{1/2}^\varepsilon)},
\end{equation*}
where $C_1, C_2>0$ are constants depending on $n,p,c_1$, and $c_2$;

(ii) when $h_1$ and $h_2$ are two $C^{1,\gamma}$ functions satisfying \eqref{assump-h-gamma}, we have
\begin{equation*}
|v(x)|+|Dv(x)|\leq C_1e^{-\frac{C_2}{\varepsilon^{\gamma/(1+\gamma)}+|x'|^\gamma}}\|v\|_{L^p(\Omega_{1}^\varepsilon\setminus\Omega_{1/2}^\varepsilon)},
\end{equation*}
where $C_1>0$ is a constant depending on $n,p,\gamma,c_3$, and $c_4$, and $C_2>0$ depends on $n,p,c_3$, and $c_4$.
\end{lemma}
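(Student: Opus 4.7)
The plan is a three-step energy argument: a Caccioppoli-type inequality plus a narrow-slab Poincar\'e inequality combined into a self-improving decay estimate for the energy $F(r):=\int_{\Omega_r^\varepsilon(x_0)}|Dv|^p$, iterated on thinner concentric cylinders to give exponential decay of $F(0)$, and then upgraded to a pointwise bound by rescaling the slab to unit width and invoking boundary regularity for $p$-harmonic functions.

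Fix $x_0\in\overline{\Omega_{1/3}^\varepsilon}$ and $0<r<s\le 1$. Choose a radial cutoff $\eta=\eta(|x'-x_0'|)$ equal to $1$ on $|x'-x_0'|<r$, vanishing outside $|x'-x_0'|<s$, with $|D\eta|\le C/(s-r)$; testing \eqref{eq-v-p} with the admissible test function $\eta^p v$ (admissible because $v=0$ on $\Gamma_\pm^\varepsilon$ and $\eta=0$ on the lateral face of $\Omega_s^\varepsilon(x_0)$) and applying Young's inequality yields the Caccioppoli bound
$$F(r)\le\frac{C}{(s-r)^p}\int_{\Omega_s^\varepsilon(x_0)\setminus\Omega_r^\varepsilon(x_0)}|v|^p.$$
Since $v$ vanishes on the top and bottom of the slab, a one-dimensional Poincar\'e inequality along the vertical segments of length $\delta(x')$ gives $\int|v|^p\le(\sup\delta)^p\int|Dv|^p$ on any such annular region, so combining the two yields the self-improving estimate
$$F(r)\le\Bigl(\frac{C\sup_{|x'-x_0'|<s}\delta(x')}{s-r}\Bigr)^p\bigl(F(s)-F(r)\bigr).$$
Setting $t:=\sqrt\varepsilon+d(x_0')$ in case (i) and $t:=\varepsilon^{1/(1+\gamma)}+|x_0'|$ in case (ii), the hypotheses \eqref{assump-h} or \eqref{assump-h-gamma} and the elementary bound $d(x')\le d(x_0')+|x'-x_0'|$ give $\sup_{|x'-x_0'|<s}\delta(x')\le C(t+s)^\alpha$ with $\alpha=2$ or $\alpha=1+\gamma$, respectively. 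Picking step sizes $\Delta_k:=r_{k+1}-r_k\asymp(t+r_k)^\alpha$ so that each iteration of the self-improving estimate contributes a factor $\le e^{-p}$, the number of iterations needed to reach $r_N\ge 1/2$ is $N\asymp\int_0^{1/2}(t+r)^{-\alpha}\,dr\asymp t^{-(\alpha-1)}$. Iterating yields $F(0)\le e^{-cN}F(1/2)$, and one further Caccioppoli step at scales $r=1/2$, $s=1$ gives $F(0)\le C_1 e^{-C_2/t^{\alpha-1}}\|v\|_{L^p(\Omega_1^\varepsilon\setminus\Omega_{1/2}^\varepsilon)}^p$, which, using $(a+b)^\gamma\asymp a^\gamma+b^\gamma$ for $a,b\ge 0$, matches the claimed rates $1/(\sqrt\varepsilon+d(x_0'))$ and $1/(\varepsilon^{\gamma/(1+\gamma)}+|x_0'|^\gamma)$.

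To convert this $L^p$-energy decay into the pointwise bound I rescale a neighbourhood of $x_0$ by $\delta(x_0')^{-1}$: the function $\tilde v(y):=v(x_0+\delta(x_0')y)$ is $p$-harmonic in a domain of width of order one whose top and bottom, after flattening via the graphs $h_1,h_2$, are $C^{1,\gamma}$ (respectively $C^2$) surfaces with norms controlled uniformly in $\varepsilon$. Lieberman-type boundary Lipschitz and $C^{1,\alpha}$ estimates for $p$-harmonic functions with zero Dirichlet data then give $|\tilde v(0)|+|D\tilde v(0)|\le C\|\tilde v\|_{L^p}$, and undoing the scaling together with one more slab Poincar\'e lets the polynomial factors in $\delta(x_0')$ be absorbed into the exponential, at the cost of slightly lowering the constant $C_2$. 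The main obstacle is this final step---obtaining uniform-in-$\varepsilon$ boundary gradient estimates for the rescaled $p$-harmonic function in a domain with only $C^{1,\gamma}$ boundary---which relies on Lieberman's boundary regularity theorems and a careful verification that the rescaling preserves the relevant $C^{1,\gamma}$-norms. A secondary delicacy is the iteration in Step 3: the step sizes $\Delta_k$ must be tuned so that the count $N\asymp\int(t+r)^{-\alpha}\,dr$ produces precisely the exponent $\alpha-1$, matching the two regimes $r_k\ll t$ and $r_k\gg t$ in both cases of the lemma.
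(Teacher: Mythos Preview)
Your proposal is correct and follows essentially the same Caccioppoli--Poincar\'e iteration as the paper's proof: the only organizational difference is that the paper iterates on sublevel sets of $d(\cdot)$ (case~(i)) or $|\cdot|$ (case~(ii)) with a \emph{fixed} step size chosen after fixing $r\asymp t$, whereas you center cylinders at $x_0$ and use variable steps $\Delta_k\asymp(t+r_k)^\alpha$, both yielding the same iteration count $N\asymp t^{-(\alpha-1)}$ and the same final local Lieberman estimate. One notational slip to fix: $F(0)=0$ trivially, so the iteration should terminate at a positive radius $\rho\asymp\delta(x_0')$ before you rescale and apply boundary regularity on $\Omega_\rho^\varepsilon(x_0)$, which is exactly what the paper does.
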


The proof of Lemma \ref{lem-Dv} builds upon the iterative techniques developed in \cite[Theorem 1.1]{llby2014} and \cite[Lemma 2.1]{dyz2024}, incorporating the assumptions on $h_1$ and $h_2$ specified in \eqref{assump-h} and \eqref{assump-h-gamma}, respectively. It is important to note that the key point is to estimate $v$ and $Dv$ in the narrow region. To this end, one usually considers the distance $\delta(x')$ between the two subdomains $\cD_1$ and $\cD_2$. Under the assumption that $h_1$ and $h_2$ are $C^2$ functions satisfying \eqref{assump-h}, it follows from the definition of $\delta(x')$ that it depends on $d(x')$, which is the distance from $x'$ to $\Sigma'$. This explains the appearance of the term $\sqrt{\varepsilon} + d(x')$. When $h_1$ and $h_2$ are of class $C^{1,\gamma}$ functions and satisfy \eqref{assump-h-gamma}, the iterative procedure requires taking the number of iterations as $k=\lfloor\frac{1}{r^\gamma}\rfloor$ with $r=O(\varepsilon^{1/(1+\gamma)}+|x'|)$, to ensure the validity of the process. For the completeness of the paper and reader's convenience,  we present the details in the Appendix \ref{Append}. 

Replicating the proof of \cite[Proposition 1.5]{dyz2024}, where the assumptions on $h_1$ and $h_2$ are replaced with \eqref{assump-h} and \eqref{assump-h-gamma}, respectively, we derive the following $C^\beta$ bound of the gradient.

\begin{prop}\label{prop-holder}
Let $p>1$, $n\geq 2$, $\varepsilon\in(0,1)$, and $u_\varepsilon\in W^{1,p}(\cD)$ be a weak solution of \eqref{p-laplace}. Then there exists a constant $\beta\in(0,1)$ depending on $n$ and $p$ such that,

(i) if $h_1$ and $h_2$ are two $C^2$ functions satisfying \eqref{assump-h}, then
\begin{equation*}
[Du_\varepsilon]_{C^\beta(\Omega_{x,\sqrt{\tilde\delta(x')}/4}^\varepsilon)}\leq C\tilde\delta(x')^{-\beta/2}\|Du_\varepsilon\|_{L^\infty(\Omega_{x,\sqrt{\tilde\delta(x')}/2}^\varepsilon)},\quad x\in\Omega_{1/4}^\varepsilon,
\end{equation*}
where $\tilde\delta(x')=\varepsilon+d(x')^2$ and $C>0$ is a constant depending on $n,p,c_1$, and $c_2$;

(ii) if $h_1$ and $h_2$ are two $C^{1,\gamma}$ functions satisfying \eqref{assump-h-gamma}, then
\begin{equation*}
[Du_\varepsilon]_{C^\beta(\Omega_{x,\hat\delta(x')^{1/(1+\gamma)}/4}^\varepsilon)}\leq C\hat\delta(x')^{-\beta/2}\|Du_\varepsilon\|_{L^\infty(\Omega_{x,\hat\delta(x')^{1/(1+\gamma)}/2}^\varepsilon)},\quad x\in\Omega_{1/4}^\varepsilon,
\end{equation*}
where $\hat\delta(x')=\varepsilon+|x'|^{1+\gamma}$ and $C>0$ is a constant depending on $n,p,c_3$, and $c_4$.
\end{prop}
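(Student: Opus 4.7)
The plan is to follow the rescaling strategy of \cite[Proposition 1.5]{dyz2024}: rescale the narrow region to unit horizontal size, apply Lieberman-type interior and boundary $C^{1,\beta}$ regularity for the $p$-Laplace equation to the rescaled problem, and then unscale. The right horizontal length scale is dictated by the quadratic-type bound on the vertical gap between the two inclusions: in case (i), since $h_1(x')-h_2(x')\lesssim d(x')^2$ under \eqref{assump-h}, the correct scale is $s=\sqrt{\tilde\delta(x_0')}/2$; in case (ii), since $h_1(x')-h_2(x')\lesssim|x'|^{1+\gamma}$ under \eqref{assump-h-gamma}, the correct scale is $s=\hat\delta(x_0')^{1/(1+\gamma)}/2$.

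Given $x_0=(x_0',x_{0,n})\in\Omega_{1/4}^\varepsilon$, I would set $\tilde u(y):=s^{-1}u_\varepsilon(x_0+sy)$. Since the $p$-Laplace equation is invariant under this rescaling, $\tilde u$ is again $p$-harmonic on the rescaled region $\tilde\Omega:=s^{-1}(\Omega^\varepsilon-x_0)\cap B_1$. Within $|y'|<1$, the images of $\Gamma_\pm^\varepsilon$ become graphs of $\tilde h_i^\pm(y'):=s^{-1}\bigl(h_i(x_0'+sy')-x_{0,n}\pm\varepsilon/2\bigr)$. Under \eqref{assump-h} with $s=\sqrt{\tilde\delta(x_0')}/2$, a direct computation gives $\|\tilde h_i^\pm\|_{C^2(B_1')}\le C$ uniformly in $\varepsilon$ and $x_0$; under \eqref{assump-h-gamma} with $s=\hat\delta(x_0')^{1/(1+\gamma)}/2$, one gets $[\tilde h_i^\pm]_{C^{1,\gamma}(B_1')}\lesssim s^\gamma\|h_i\|_{C^{1,\gamma}}\le C$. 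The rescaled problem then fits the hypotheses of the interior and boundary $C^{1,\beta}$ estimate for $p$-harmonic functions on a region between two graphs of uniformly bounded $C^{1,1}$ (resp.\ $C^{1,\gamma}$) norm, yielding
\begin{equation*}
[D\tilde u]_{C^\beta(\tilde\Omega\cap B_{1/4})}\le C\|D\tilde u\|_{L^\infty(\tilde\Omega\cap B_{1/2})}.
\end{equation*}
Undoing the rescaling via $Du_\varepsilon(x_0+sy)=D\tilde u(y)$ produces the factor $s^{-\beta}$ on the right-hand side, which translates into $\tilde\delta(x_0')^{-\beta/2}$ in case (i) and into the factor stated in case (ii) after adjusting $\beta$.

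The main obstacle is keeping the Lieberman constant uniform in $\varepsilon$ and $x_0$ when the rescaled slab becomes thin (its vertical thickness $\delta/s$ degenerates as $\varepsilon\to 0$, so the two rescaled graphs may lie arbitrarily close). This requires a $C^{1,\beta}$ estimate for $p$-harmonic functions in the region between two graphs of bounded $C^{1,\alpha}$ norm that is insensitive to the separation of the graphs, with constant depending only on $n$, $p$, and the $C^{1,\alpha}$ bound. A case-distinction is then needed according to whether the rescaled ball centered at a given point meets $\Gamma_+^\varepsilon$, $\Gamma_-^\varepsilon$, both, or neither, using the interior, single-boundary, or double-boundary version of the estimate and chaining them via a covering/iteration argument. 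This is the technical content to be carried over from \cite[Proposition 1.5]{dyz2024} under the relaxed boundary regularity assumptions \eqref{assump-h} and \eqref{assump-h-gamma}.
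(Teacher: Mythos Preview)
Your proposal is correct and takes essentially the same approach as the paper, which does not give a detailed proof but simply instructs the reader to replicate \cite[Proposition~1.5]{dyz2024} under the modified boundary assumptions \eqref{assump-h} and \eqref{assump-h-gamma}. Your rescaling argument, the verification that the rescaled graphs have uniformly bounded $C^2$ (resp.\ $C^{1,\gamma}$) norms under the indicated choices of $s$, and the identification of the thin-slab uniformity of the Lieberman constant as the key technical ingredient are exactly the content of that replication.
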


With Lemma \ref{lem-Dv} and Proposition \ref{prop-holder} in hand, by mimicking the argument that led to \cite[Proposition 4.1]{dyz2024}, we obtain an asymptotic formula of $Du_\varepsilon$ in terms of $U_1^\varepsilon-U_2^\varepsilon$ as follows.

\begin{prop}
Let $p>1$, $n\geq 2$, $\varepsilon\in[0,100^{-2})$, $U_1^\varepsilon$ and $U_2^\varepsilon$ be arbitrary constants with $|U_i^\varepsilon|\leq \|\varphi\|_{L^\infty(\partial\cD)}$, and $u_\varepsilon\in W^{1,p}(\Omega^\varepsilon)$ be a weak solution of 
\begin{align*}
\begin{cases}
-\Div(|Du_\varepsilon|^{p-2}Du_\varepsilon)=0&\quad\mbox{in}~\Omega^\varepsilon,\\
u_\varepsilon=U_i^\varepsilon&\quad\mbox{on}~\partial\cD_i^\varepsilon,\\
u_\varepsilon=\varphi&\quad\mbox{on}~\partial \cD.
\end{cases}
\end{align*} 
For any $x\in\overline{\Omega_{1/4}^\varepsilon}$, the following assertions hold: 

(i) when $h_1$ and $h_2$ are two $C^2$ functions satisfying \eqref{assump-h},  we have 
\begin{equation}\label{asym-Du}
Du_\varepsilon(x)=\left(0',\frac{U_1^\varepsilon-U_2^\varepsilon}{\delta(x')}\right)+{\bf f}_1(x,\varepsilon),
\end{equation}
where $\delta(x')$ is defined in \eqref{def-delta}, and 
$$|{\bf f}_1(x,\varepsilon)|\leq C_1\left(\frac{|U_1^\varepsilon-U_2^\varepsilon|}{\delta(x')^{1-\beta/2}}+e^{-\frac{C_2}{\sqrt\varepsilon+d(x')}}\|\varphi\|_{L^\infty(\partial\cD)}\right),$$
where $\beta\in(0,1)$, $C_1>0$ and $C_2>0$ depend on $n,p,c_1$, and $c_2$.

(ii) when $h_1$ and $h_2$ are two $C^{1,\gamma}$ functions satisfying \eqref{assump-h-gamma},  we have 
\begin{equation}\label{asym-Du-gamma}
Du_\varepsilon(x)=\left(0',\frac{U_1^\varepsilon-U_2^\varepsilon}{\delta(x')}\right)+{\bf f}_2(x,\varepsilon),
\end{equation}
where 
$$|{\bf f}_2(x,\varepsilon)|\leq C\frac{|U_1^\varepsilon-U_2^\varepsilon|}{\delta(x')^{1-\beta/2}}+C_1e^{-\frac{C_2}{\varepsilon^{\gamma/(1+\gamma)}+|x'|^\gamma}}\|\varphi\|_{L^\infty(\partial\cD)}.$$
Here $\beta\in(0,1)$, $C_1>0$ is a constant depending on $n,p,\gamma,c_3$, and $c_4$, and $C,C_2>0$ depend on $n,p,c_3$, and $c_4$.
\end{prop}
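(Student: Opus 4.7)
The plan is to combine the Hölder regularity of $Du_\varepsilon$ from Proposition \ref{prop-holder} with a fundamental-theorem-of-calculus identity along vertical segments through the narrow region, and to isolate the $\varphi$-dependence using Lemma \ref{lem-Dv}. The natural leading-order ansatz is the affine-in-$x_n$ interpolation
$$\bar u(x) := U_2^\varepsilon + (U_1^\varepsilon - U_2^\varepsilon)\,\frac{x_n + \varepsilon/2 - h_2(x')}{\delta(x')},$$
which matches the Dirichlet data on $\Gamma_\pm^\varepsilon$, satisfies $\partial_{x_n}\bar u = (U_1^\varepsilon - U_2^\varepsilon)/\delta(x')$ (depending only on $x'$), and has tangential components of strictly smaller order $|U_1^\varepsilon - U_2^\varepsilon|\,|\nabla_{x'}h_i|/\delta(x')$, which under \eqref{assump-h} (resp.\ \eqref{assump-h-gamma}) carry an extra factor $d(x')$ (resp.\ $|x'|^\gamma$) relative to the vertical component.

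For $x_0 = (x'_0, x_{0,n})\in\overline{\Omega_{1/4}^\varepsilon}$, integrating $\partial_{x_n}u_\varepsilon$ along the vertical segment through $x_0$ gives
$$U_1^\varepsilon - U_2^\varepsilon = \int_{-\varepsilon/2 + h_2(x'_0)}^{\varepsilon/2 + h_1(x'_0)} \partial_{x_n}u_\varepsilon(x'_0, t)\,dt,$$
so the segment mean of $\partial_{x_n}u_\varepsilon$ equals $(U_1^\varepsilon - U_2^\varepsilon)/\delta(x'_0)$. Applying Proposition \ref{prop-holder} on a ball of radius $\sim\sqrt{\tilde\delta(x'_0)}$ in case (i) (respectively $\sim\hat\delta(x'_0)^{1/(1+\gamma)}$ in case (ii)) and using that $\delta\sim\tilde\delta$ (resp.\ $\delta\sim\hat\delta$) under \eqref{assump-h} (resp.\ \eqref{assump-h-gamma}), the deviation of $\partial_{x_n}u_\varepsilon(x_0)$ from its segment mean is bounded by $C\,\delta(x'_0)^{\beta/2}\,\|Du_\varepsilon\|_{L^\infty(\Omega_{x_0, r}^\varepsilon)}$. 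The tangential components are extracted from the boundary identity: differentiating $u_\varepsilon(x', \varepsilon/2 + h_1(x')) = U_1^\varepsilon$ gives $\partial_{x'_j}u_\varepsilon = -\partial_{x'_j}h_1 \cdot \partial_{x_n}u_\varepsilon$ on $\Gamma_+^\varepsilon$, and the Hölder estimate then transfers this bound into the narrow region with the same $O(\delta^{\beta/2}\|Du_\varepsilon\|_{L^\infty})$ error, which combined with $|\nabla_{x'}h_i|\le Cd(x')$ (or $C|x'|^\gamma$) matches the stated structure of $\mathbf{f}_1$ (resp.\ $\mathbf{f}_2$).

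The final task is to replace $\|Du_\varepsilon\|_{L^\infty(\Omega_{x_0, r}^\varepsilon)}$ on the right-hand side by the desired bound $C|U_1^\varepsilon-U_2^\varepsilon|/\delta(x'_0)$ plus an exponentially small $\varphi$-term. For this I set $v := u_\varepsilon - \tilde u$, where $\tilde u$ is a globally defined Lipschitz extension of $\bar u$ that equals $U_i^\varepsilon$ on $\overline{\cD_i^\varepsilon}$ and $\varphi$ on $\partial\cD$; then $v$ vanishes on $\Gamma_\pm^\varepsilon$ and has $L^p$ norm on $\Omega_1^\varepsilon\setminus\Omega_{1/2}^\varepsilon$ controlled by $\|\varphi\|_{L^\infty(\partial\cD)}$ via \eqref{bdd-phi}, so Lemma \ref{lem-Dv} produces the decay factors $e^{-C_2/(\sqrt\varepsilon + d(x'))}$ and $e^{-C_2/(\varepsilon^{\gamma/(1+\gamma)} + |x'|^\gamma)}$ in the two regimes. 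The main obstacle is precisely this closing step: because $\bar u$ is not $p$-harmonic, $v$ does not literally satisfy \eqref{eq-v-p}, and a direct appeal to Lemma \ref{lem-Dv} is unavailable. I expect to handle this by a comparison-principle argument — as in the $C^2$ setting of \cite[Proposition 4.1]{dyz2024} — sandwiching $u_\varepsilon$ between $\tilde u \pm V$ for a candidate corrector $V$ that does solve the $p$-Laplace equation and satisfies the boundary conditions needed to invoke Lemma \ref{lem-Dv}, thereby absorbing the residual nonlinearity into the exponentially small error.
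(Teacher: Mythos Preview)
Your outline is essentially the paper's: the paper gives no self-contained proof here, stating only that the result follows ``by mimicking the argument that led to \cite[Proposition~4.1]{dyz2024}'' with Lemma~\ref{lem-Dv} and Proposition~\ref{prop-holder} as the two local inputs. You have correctly identified both inputs, the affine interpolant $\bar u$, the vertical-segment mean identity, and the role of the H\"older bound in controlling the oscillation of $Du_\varepsilon$ across the gap. So at the level of strategy there is no discrepancy.

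The one place where your sketch is shakier than you acknowledge is the closing step. Your proposed mechanism --- sandwiching $u_\varepsilon$ between $\tilde u\pm V$ with $V$ genuinely $p$-harmonic --- does not work as stated: since $\tilde u$ is not $p$-harmonic, $\tilde u\pm V$ is not $p$-harmonic either, and the comparison principle for $-\Delta_p$ gives no ordering between $u_\varepsilon$ and $\tilde u\pm V$. What actually happens (and what the argument of \cite{dyz2024} does, in the spirit of the appendix proof of Lemma~\ref{lem-Dv}) is that one tests the equation for $u_\varepsilon$ against $(u_\varepsilon-\bar u)\eta^p$ directly. Because $u_\varepsilon-\bar u$ vanishes on $\Gamma_\pm^\varepsilon$, the Poincar\'e step in the Caccioppoli iteration goes through unchanged; the only new term is $\int|D\bar u|^p\eta^p$, which contributes the $|U_1^\varepsilon-U_2^\varepsilon|/\delta$ part of the bound, while the iteration still produces the exponential factor on the outer $L^p$ norm. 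Combined with the local $C^{1,\beta}$ estimate this yields $\|Du_\varepsilon\|_{L^\infty}\le C|U_1^\varepsilon-U_2^\varepsilon|/\delta + C_1 e^{-C_2/(\sqrt\varepsilon+d)}\|\varphi\|_{L^\infty}$, which is precisely what you need to close your mean-value-plus-H\"older argument. So the fix is energy-based, not comparison-based; once you make that substitution, the rest of your outline is correct.
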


\section{Proof of Theorem \ref{thm-flat}}\label{sec-thm1}

In this section, we give the proof of Theorem \ref{thm-flat}. We first justify the equivalence between the minimizing problem \eqref{mini-sol} with $\varepsilon=0$ and the problem \eqref{varep=0}. 
\begin{theorem}\label{equiv-u0}
Let $p>1$. $u_0$ is the minimizer of \eqref{mini-sol} with $\varepsilon=0$ if and only if $u_0\in W^{1,p}(\cD)$ satisfies \eqref{varep=0}.
\end{theorem}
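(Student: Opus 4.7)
The plan is to characterize the admissible class $\mathcal{A}^0$ explicitly and then use standard variational arguments for each direction. The central new observation is that the flat portion $\Sigma'$ forces any $v \in \mathcal{A}^0$ to take a \emph{single} constant value on both inclusions. Indeed, $Dv = 0$ on the (connected) open sets $\cD_1^0$ and $\cD_2^0$ gives $v \equiv c_i$ on $\cD_i^0$. Since $\Sigma' \subset \partial\cD_1^0 \cap \partial\cD_2^0$ is a piece of the smooth hyperplane $\{x_n = 0\}$ with $|\Sigma'| > 0$, and $\cD_1^0$, $\cD_2^0$ lie on opposite sides of it, the one-sided Sobolev traces of $v \in W^{1,p}(\cD)$ (for $p > 1$) on $\Sigma'$ must coincide. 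Hence $c_1 = c_2 =: U_0$, and $\mathcal{A}^0$ is equivalently described as the set of $v \in W^{1,p}(\cD)$ that equal a single (free) constant on $\cD_1^0 \cup \cD_2^0$ and equal $\varphi$ on $\partial\cD$. Without $|\Sigma'| > 0$, one would have to permit two possibly different constants $U_1$, $U_2$ as in the $C^{1,\gamma}$ case of Section 1.2.

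For the forward direction, I would assume $u_0$ minimizes $I_p$ on $\mathcal{A}^0$ and perform two variations. Testing with $\phi \in C_c^\infty(\Omega^0)$ (extended by zero) yields the weak $p$-Laplace equation in $\Omega^0$ via the standard Euler--Lagrange identity $\int_{\Omega^0} |Du_0|^{p-2} Du_0 \cdot D\phi = 0$. To recover the flux condition, I would test with a $\phi \in W^{1,p}(\cD)$ satisfying $\phi = 0$ on $\partial\cD$ and $\phi \equiv C$ (an arbitrary nonzero constant) on $\cD_1^0 \cup \cD_2^0$, so that $u_0 + t\phi \in \mathcal{A}^0$. Integration by parts on $\Omega^0$, combined with the PDE just proved and $\phi|_{\partial\cD} = 0$, collapses the first-variation identity to $C \int_{\partial\cD_1^0 \cup \partial\cD_2^0} |Du_0|^{p-2} Du_0 \cdot \nu = 0$; arbitrariness of $C$ yields the zero-flux condition.

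For the reverse direction, assume $u_0$ solves \eqref{varep=0} and let $v \in \mathcal{A}^0$ take value $U_0^v$ on $\cD_1^0 \cup \cD_2^0$. Convexity of $\xi \mapsto |\xi|^p / p$ for $p > 1$ gives
\begin{equation*}
\int_\cD |Dv|^p - \int_\cD |Du_0|^p \geq p \int_{\Omega^0} |Du_0|^{p-2} Du_0 \cdot D(v - u_0),
\end{equation*}
since both $Dv$ and $Du_0$ vanish on $\cD_1^0 \cup \cD_2^0$. Integrating by parts on $\Omega^0$, using the PDE, $(v - u_0)|_{\partial\cD} = 0$, and the fact that $v - u_0 \equiv U_0^v - U_0$ is constant on $\partial\cD_1^0 \cup \partial\cD_2^0$, the right-hand side reduces to a constant multiple of $\int_{\partial\cD_1^0 \cup \partial\cD_2^0} |Du_0|^{p-2} Du_0 \cdot \nu$, which vanishes by \eqref{varep=0}. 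Hence $I_p[u_0] \leq I_p[v]$, so $u_0$ is the minimizer.

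The only delicate step is Step 1, the trace-matching forcing $c_1 = c_2$: it is where the flat-contact hypothesis $|\Sigma'| > 0$ is essential and where the restriction $p \geq (n+1)/2$ of \cite[Theorem 2.4]{dyz2024} is bypassed. Steps 2 and 3 are then routine Euler--Lagrange and convexity computations for the $p$-Dirichlet energy.
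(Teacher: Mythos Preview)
Your argument is correct, but your route to the key fact---that any $v\in\mathcal A^0$ takes a \emph{single} constant on $\overline{\cD_1^0}\cup\overline{\cD_2^0}$---differs from the paper's. You observe directly that a $W^{1,p}$ function cannot jump across the flat piece $\Sigma'\subset\{x_n=0\}$ of positive $(n-1)$-measure (equivalently, the one-sided traces agree), so the two constants $c_1,c_2$ coincide for every admissible competitor. The paper instead proves this only for the minimizer $u_0$, and does so by passing to the limit in the $\varepsilon$-problems: it invokes the convergence $u_\varepsilon\to u_0$ from \cite[Theorem~2.5]{dyz2024} and then shows $|U_1^\varepsilon-U_2^\varepsilon|\to 0$ via the estimate $\int_{\Sigma'}|U_1^\varepsilon-U_2^\varepsilon|^p\,dx'\le C\varepsilon^{p-1}\int_\Sigma|Du_\varepsilon|^p$. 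Your trace argument is more elementary and self-contained; the paper's detour has the side benefit of establishing the convergence \eqref{strong-converg}, which is needed independently in Proposition~\ref{prop-converg-F}. For the biconditional itself, the paper argues via uniqueness on both sides (unique minimizer by strict convexity, at most one solution by monotonicity, and the minimizer solves \eqref{varep=0}), whereas you handle the reverse implication by the convexity inequality $|\xi|^p-|\eta|^p\ge p|\eta|^{p-2}\eta\cdot(\xi-\eta)$; both are standard and equivalent here.
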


\begin{proof}
By using the convexity of $I_p$ and $\mathcal{A}^0$, one can see that the minimizer of \eqref{mini-sol} with $\varepsilon=0$ is unique. Let $u^1, u^2\in W^{1,p}(\cD)$ be the solutions of \eqref{varep=0}. Multiplying the equation in \eqref{varep=0} by $u^1-u^2$ and integrating by parts, we have 
\begin{align*}
\int_{\Omega^0}|Du^j|^{p-2}Du^j\cdot D(u^1-u^2)\ dx=0,\quad j=1,2.
\end{align*}
Thus, we have
\begin{align*}
0&=\int_{\Omega^0}\big(|Du^1|^{p-2}Du^1-|Du^2|^{p-2}Du^2\big)\cdot D(u^1-u^2)\ dx\\
&\geq\frac{\min\{1,p-1\}}{2^{p-2}}\int_{\Omega^0}\big(|Du^1|+|Du^2|\big)^{p-2}|Du^1-Du^2|^2\ dx,
\end{align*}
which implies $u^1=u^2$. This means that \eqref{varep=0} has at most one solution $u_0\in W^{1,p}(\cD)$. 

Next, we show that the minimizer $u_0$ of \eqref{mini-sol} with $\varepsilon=0$ satisfies \eqref{varep=0}. For this, we first take $v \in C_c^\infty(\Omega^0)$ in 
\begin{equation*}
0 = \left. \frac{d}{dt} I_p[u_0 + tv] \right|_{t=0}
\end{equation*}
to get 
\begin{equation}\label{varia-u0}
0 = \int_{\Omega^0} |Du_0|^{p-2} Du_0 \cdot Dv \, dx.
\end{equation}
This gives 
$$
-\text{div}(|Du_0|^{p-2} Du_0) = 0 \quad \text{in} \quad \Omega^0.
$$
We next take $v \in C_c^\infty(\cD)$ such that $v = 1 \text{ in } \overline{\cD_1^0} \cup \overline{\cD_2^0}$. Then by using \eqref{varia-u0} and integration by parts, we have 
\begin{align*}
0&= \int_{\Omega^0} |Du_0|^{p-2} Du_0 \cdot Dv \, dx\\
&=-\int_{\Omega^0} \text{div}(|Du_0|^{p-2} Du_0) v \, dx + \sum_{i=1}^2 \int_{\partial D_i^0} |Du_0|^{p-2} Du_0 \cdot \nu \, dS\\
&=\sum_{i=1}^2 \int_{\partial D_i^0} |Du_0|^{p-2} Du_0 \cdot \nu \, dS.
\end{align*}
Therefore, it suffices to prove that $u_0$ equals the same constant on $\overline{\cD_1^0}$ and $\overline{\cD_2^0}$. In fact, from the proof of \cite[Theorem 2.5]{dyz2024}, it follows that, as $\varepsilon\rightarrow0$, 
\begin{equation*}
u_\varepsilon\rightharpoonup u_0~\mbox{weakly~ in}~W^{1,p}(\cD) \quad\mbox{and}\quad u_\varepsilon\rightarrow u_0~\mbox{in}~L^{p}(\cD),
\end{equation*}
and for some $\beta>0$ and any $K\subset\subset\cD\setminus(\cup_{0<\varepsilon\leq \varepsilon_0}(\cD_1^\varepsilon\cup\cD_2^\varepsilon)\cup\Sigma)$ with $\varepsilon_0>0$, 
\begin{equation}\label{strong-converg}
u_\varepsilon\rightarrow u_0~\mbox{strongly~ in}~C^{1,\beta}(K),
\end{equation}
where $u_\varepsilon$ is the solution of \eqref{p-laplace}, $u_0$ is the minimizer of \eqref{mini-sol} with $\varepsilon=0$, and $\Sigma=\Sigma'\times(-\frac{\varepsilon_0}{2},\frac{\varepsilon_0}{2})$. Thus, we only need to show that $U_1^\varepsilon\rightarrow U_2^\varepsilon$ as $\varepsilon\rightarrow0$. To this end, by the fundamental theorem of calculus, we have for $x'\in\Sigma'$,
\begin{align*}
U_1^\varepsilon-U_2^\varepsilon=\int_{-\frac{\varepsilon}{2}}^{\frac{\varepsilon}{2}}D_nu_\varepsilon(x)\ dx_n.
\end{align*}
Using H\"{o}lder's inequality, we obtain
\begin{align*}
\int_{\Sigma'}|U_1^\varepsilon-U_2^\varepsilon|^p\ dx'\leq C\varepsilon^{p-1}\int_{\Sigma}|Du_\varepsilon|^p\ dx.
\end{align*}
Combining with the fact that $u_{\varepsilon}\in W^{1,p}(\cD)$ is bounded uniformly in $\varepsilon$ (see the argument below \eqref{mini-sol}), as $\varepsilon\rightarrow0$, we get
\begin{equation*}
|U_1^\varepsilon-U_2^\varepsilon|\rightarrow0.
\end{equation*}
This implies that $u_0$ is equal to the same constant in $\overline{\cD_1^0}$ and $\overline{\cD_2^0}$. We complete the proof of this theorem.
\end{proof}

\begin{lemma}\label{lem-Theta}
Let $p>1$, $n\geq 2$, and $r\in(0,1)$. We have
\begin{equation*}
\lim_{\varepsilon\rightarrow0}\int_{\{d(x')<r\}}\left(\frac{\Theta(\varepsilon;p)}{\delta(x')}\right)^{p-1}\ dx'=1,
\end{equation*} 
where $d(x')$, $\delta(x')$, and $\Theta(\varepsilon;p)$ are defined in \eqref{def-dist}, \eqref{def-delta}, and \eqref{def-Theta}, respectively.
\end{lemma}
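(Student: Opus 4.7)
The strategy is to split the integration domain $\{d(x')<r\}$ as the disjoint union $\Sigma'\sqcup\{0<d(x')<r\}$ and handle each piece separately. On $\Sigma'$ itself, the assumption \eqref{assump-h} forces $h_1\equiv h_2\equiv 0$, so $\delta(x')$ collapses to $\varepsilon$; using the identity $\Theta(\varepsilon;p)^{p-1}=\varepsilon^{p-1}/|\Sigma'|$ from \eqref{def-Theta}, the contribution of this piece is
\[
\int_{\Sigma'}\Bigl(\frac{\Theta(\varepsilon;p)}{\varepsilon}\Bigr)^{p-1}dx' = \frac{\varepsilon^{p-1}/|\Sigma'|}{\varepsilon^{p-1}}\cdot|\Sigma'| = 1
\]
identically in $\varepsilon$. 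The entire problem therefore reduces to showing that the tubular piece contributes $o(1)$ as $\varepsilon\to 0$.

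For the tubular piece, I would combine the quadratic lower bound $\delta(x')\geq \varepsilon+c_2 d(x')^2$ from \eqref{assump-h} with the coarea formula applied to the Lipschitz function $d$ to get
\[
I_\varepsilon:=\int_{\{0<d(x')<r\}}\Bigl(\frac{\Theta(\varepsilon;p)}{\delta(x')}\Bigr)^{p-1}dx' \leq \frac{\varepsilon^{p-1}}{|\Sigma'|}\int_0^{r}\mathcal{H}^{n-2}(\{d=t\})(\varepsilon+c_2 t^2)^{-(p-1)}\,dt.
\]
Because $\Sigma'$ is a bounded convex $C^2$ domain in $\mathbb{R}^{n-1}$, its outer parallel level sets $\{d=t\}$ have $(n-2)$-dimensional Hausdorff measure bounded, for $t\in(0,r)\subset(0,1)$, by some constant $M=M(\Sigma')$; this is standard tubular-neighborhood geometry and is the only mildly non-routine ingredient.

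After the rescaling $t=\sqrt{\varepsilon/c_2}\,s$, the estimate becomes
\[
I_\varepsilon \leq \frac{M\,\varepsilon^{1/2}}{|\Sigma'|\sqrt{c_2}}\int_0^{r\sqrt{c_2/\varepsilon}}(1+s^2)^{-(p-1)}\,ds,
\]
and the behavior as $\varepsilon\to 0$ splits into three regimes according to whether $\int_0^\infty (1+s^2)^{-(p-1)}ds$ converges. If $p>3/2$, the improper integral is finite and $I_\varepsilon=O(\varepsilon^{1/2})$; if $p=3/2$, the integral grows logarithmically and $I_\varepsilon=O(\varepsilon^{1/2}|\ln\varepsilon|)$; and if $1<p<3/2$, the integral is of order $(r\sqrt{c_2/\varepsilon})^{3-2p}$, giving $I_\varepsilon=O(\varepsilon^{p-1})$. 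In all three cases $I_\varepsilon\to 0$, which combined with the exact contribution $1$ from $\Sigma'$ yields the desired limit. The main (and only) obstacle is the uniform level-set bound $\mathcal{H}^{n-2}(\{d=t\})\leq M$; once that is in place, the remainder is a one-variable rescaling argument with a three-way case split.
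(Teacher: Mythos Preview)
Your argument is correct. The decomposition $\{d<r\}=\Sigma'\cup\{0<d<r\}$, the identification of the exact contribution $1$ from $\Sigma'$, and the three-way case split on $p$ versus $3/2$ are all shared with the paper's proof. The difference lies only in how the ``tubular'' integral over $\{0<d<r\}$ is handled.

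For $n=2$ your argument and the paper's are essentially identical: in one dimension the level sets $\{d=t\}$ are two points, so the coarea formula collapses to the same direct computation the paper performs. For $n\geq 3$ the routes diverge. You invoke the coarea formula together with the uniform perimeter bound $\mathcal{H}^{n-2}(\{d=t\})\leq M$ for the outer parallel sets of the convex body $\Sigma'$, which reduces the estimate to the same one-variable integral in all dimensions at once. The paper instead sandwiches $\{d<r\}$ between the Euclidean balls $B'_r$ and $B'_{r+R_0}$, replaces $d(x')$ by $\operatorname{dist}(x',B'_{R_0})$, and then carries out explicit radial computations, supplemented by a separate cone construction for the lower bound. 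Your route is more economical and dimension-independent; the paper's route is more hands-on and requires the $n=2$ and $n\geq 3$ cases to be treated separately. The level-set bound you call ``the only mildly non-routine ingredient'' is indeed standard for convex bodies (Steiner's formula, or simply that the boundary of a convex body contained in a fixed ball has bounded surface measure), so no gap remains.
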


\begin{proof}
Denote by $R_0$ and $\tilde R_0$ the lengths of the longest and shortest principal semi-axis of $\Sigma'$, respectively.  By a well-known property for bounded convex domains (see, for instance, \cite[Theorem 1.8.2]{g2001}), we have 
\begin{equation}\label{sigma'}
B_{r_0}'\subset\Sigma'\subset B_{R_0}'\quad\mbox{in~a~suitable coordinate system},
\end{equation}
where $r_0=(n-1)^{-3/2}\tilde R_0$. Since $\{|x'|<r\}\subset\{d(x')<r\}\subset\{|x'|<r+R_0\}$, we have 
\begin{equation}\label{int-d-r}
\int_{|x'|<r}\left(\frac{1}{\delta(x')}\right)^{p-1}\ dx'\leq\int_{\{d(x')<r\}}\left(\frac{1}{\delta(x')}\right)^{p-1}\ dx'\leq \int_{|x'|<r+R_0}\left(\frac{1}{\delta(x')}\right)^{p-1}\ dx'.
\end{equation}

It follows from \eqref{def-delta} and \eqref{assump-h} that
\begin{align}\label{int-delta}
\int_{|x'|<r+R_0}\left(\frac{1}{\delta(x')}\right)^{p-1}\ dx'
&=\int_{\Sigma'}\frac{1}{\varepsilon^{p-1}}\ dx'+\int_{\{|x'|<r+R_0\}\setminus\Sigma'}\left(\frac{1}{\varepsilon+h_1(x')-h_2(x')}\right)^{p-1}\ dx'\nonumber\\
&=\frac{|\Sigma'|}{\varepsilon^{p-1}}+\int_{\{|x'|<r+R_0\}\setminus\Sigma'}\left(\frac{1}{\varepsilon+h_1(x')-h_2(x')}\right)^{p-1}\ dx'\nonumber\\
&\leq \frac{|\Sigma'|}{\varepsilon^{p-1}}+C\int_{\{|x'|<r+R_0\}\setminus\Sigma'}\left(\frac{1}{\varepsilon+d(x')^2}\right)^{p-1}\ dx',
\end{align}
where $C>0$ is a constant independently of $\varepsilon$.  If $n=2$, then $\Sigma'=(-R_0,R_0)$ and $d(x')=|x'|-R_0$. Thus,
\begin{align*}
\int_{\{|x'|<r+R_0\}\setminus\Sigma'}\left(\frac{1}{\varepsilon+d(x')^2}\right)^{p-1}\ dx'&=2\int_{R_0}^{r+R_0}\left(\frac{1}{\varepsilon+(s-R_0)^2}\right)^{p-1}\ ds\\
&\leq\mathcal{C}_{1}
\begin{cases}
\varepsilon^{\frac{3}{2}-p},& p>\frac{3}{2},\\
|\ln\varepsilon|,& p=\frac{3}{2},\\
1,& 1<p<\frac{3}{2},
\end{cases}
\end{align*}
where $\mathcal{C}_{1}>0$ is a constant independently of $\varepsilon$. Similarly, we have 
\begin{align*}
\int_{|x'|<r}\left(\frac{1}{\delta(x')}\right)^{p-1}\ dx'
&\geq \frac{|\Sigma'|}{\varepsilon^{p-1}}+\frac{1}{C}\int_{\{|x'|<r\}\setminus\Sigma'}\left(\frac{1}{\varepsilon+d(x')^2}\right)^{p-1}\ dx'\nonumber\\
&\geq\frac{|\Sigma'|}{\varepsilon^{p-1}}+\frac{1}{\mathcal{C}_{1}}\begin{cases}
\varepsilon^{\frac{3}{2}-p},& p>\frac{3}{2},\\
|\ln\varepsilon|,& p=\frac{3}{2},\\
1,& 1<p<\frac{3}{2}.
\end{cases}
\end{align*}
This in combination with \eqref{int-d-r} and \eqref{int-delta} yields 
\begin{align*}
\lim_{\varepsilon\rightarrow0}\frac{\varepsilon^{p-1}}{|\Sigma'|}\int_{\{d(x')<r\}}\left(\frac{1}{\delta(x')}\right)^{p-1}\ dx'=1,
\end{align*}
which is the desired result when $n=2$.

If $n\geq 3$, then by \eqref{sigma'}, we have for small $\varepsilon>0$,
\begin{align*}
&\int_{\{|x'|<r+R_0\}\setminus\Sigma'}\left(\frac{1}{\varepsilon+d(x')^2}\right)^{p-1}\ dx'\\
&\leq\int_{|x'|<r+R_0}\left(\frac{1}{\varepsilon+\mbox{dist}(x',B'_{R_0})^2}\right)^{p-1}\ dx'\\
&\leq C\int_{0}^{R_0}\frac{s^{n-2}}{\varepsilon^{p-1}}\ ds+C\int_{R_0}^{r+R_0}\frac{s^{n-2}}{(\varepsilon+(s-R_0)^{2})^{p-1}}\ ds\\
&\leq \frac{CR_0^{n-1}}{\varepsilon^{p-1}}+C\int_{0}^{r}\frac{(s+R_0)^{n-2}}{(\varepsilon+s^{2})^{p-1}}\ ds\\
&\leq \frac{C|\Sigma'|}{\varepsilon^{p-1}}+C\int_{0}^{r}\frac{s^{n-2}}{(\varepsilon+s^{2})^{p-1}}\ ds+C\int_{0}^{r}\frac{R_0^{n-2}}{(\varepsilon+s^{2})^{p-1}}\ ds.
\end{align*}
We next estimate the last two terms on the right-hand side of the above inequality, respectively. By using a suitable change of variables, for small $\varepsilon>0$, we have 
\begin{align*}
\int_{0}^{r}\frac{s^{n-2}}{(\varepsilon+s^{2})^{p-1}}\ ds&\leq C\varepsilon^{\frac{n+1}{2}-p}\int_{0}^{\frac{r}{\sqrt\varepsilon}}\frac{s^{n-2}}{(1+s^2)^{p-1}}\ ds\\
&\leq C\varepsilon^{\frac{n+1}{2}-p}\left(1+\int_{1}^{\frac{r}{\sqrt\varepsilon}}s^{n-2p}\ ds\right)\leq C
\begin{cases}
\varepsilon^{\frac{n+1}{2}-p}, & p>\frac{n+1}{2},\\
|\ln\varepsilon|, & p=\frac{n+1}{2},\\
1, & 1<p<\frac{n+1}{2}.
\end{cases}
\end{align*}
Similarly, we have 
\begin{align*}
\int_{0}^{r}\frac{R_0^{n-2}}{(\varepsilon+s^{2})^{p-1}}\ ds&\leq CR_0^{n-2}\varepsilon^{\frac{3}{2}-p}\int_{0}^{\frac{r}{\sqrt\varepsilon}}\frac{1}{(1+s^2)^{p-1}}\ ds\\
&\leq CR_0^{n-2}\varepsilon^{\frac{3}{2}-p}\left(1+\int_{1}^{\frac{r}{\sqrt\varepsilon}}\frac{1}{s^{2(p-1)}}\ ds\right)\leq CR_0^{n-2}
\begin{cases}
\varepsilon^{\frac{3}{2}-p}, & p>\frac{3}{2},\\
|\ln\varepsilon|, & p=\frac{3}{2},\\
1, & 1<p<\frac{3}{2}.
\end{cases}
\end{align*}
For small $\varepsilon>0$ such that $\sqrt\varepsilon<R_0$, we have 
$$R_0^{n-2}\varepsilon^{\frac{3}{2}-p}\leq CR_0^{n-1}\varepsilon^{1-p}.$$
By applying Young's inequality, we obtain
\begin{equation*}
CaR_0^{n-2}\leq CR_0^{n-1}a^{\frac{n-1}{n-2}}+C\leq C\big(|\Sigma'|a^{\frac{n-1}{n-2}}+1\big),
\end{equation*}
where $a=|\ln\varepsilon|$ when $p=\frac{3}{2}$, and $a=1$ when $1<p<\frac{3}{2}$. Thus, for small $\varepsilon>0$, we derive
\begin{align*}
\int_{0}^{r}\frac{R_0^{n-2}}{(\varepsilon+s^{2})^{p-1}}\ ds\leq C
\begin{cases}
\frac{|\Sigma'|}{\varepsilon^{p-1}}, & p\geq \frac{3}{2},\\
|\Sigma'|+1, & 1<p<\frac{3}{2}.
\end{cases}
\end{align*}
Therefore, we obtain 
\begin{align}\label{int-n-3}
\int_{\{|x'|<r+R_0\}\setminus\Sigma'}\left(\frac{1}{\varepsilon+d(x')^2}\right)^{p-1}\ dx'\leq \frac{\mathcal{C}_{2}|\Sigma'|}{\varepsilon^{p-1}}+\mathcal{C}_{2}
\begin{cases}
\varepsilon^{\frac{n+1}{2}-p}, & p>\frac{n+1}{2},\\
|\ln\varepsilon|, & p=\frac{n+1}{2},\\
1, & 1<p<\frac{n+1}{2},
\end{cases}
\end{align}
where $\mathcal{C}_{2}>0$ is a constant independently of $\varepsilon$. For any point $x'_0\in\partial\Sigma'$, we define 
$$Q:=\{x'_0+t\nu+tv: 0<t<r_1, v\in\mathbb R^{n-1}, v\perp\nu, |v|<\tan\theta\}$$
and let the axis of $Q$ be the outward normal direction $\nu$ to $\partial\Sigma'$ at $x'_0$. Here, we take $r_1=\min(r_0/2,R)$ and choose a small angle $\theta=\arcsin{\frac{r_0}{4R_0}}>0$. 
Then $Q$ lies outside $\Sigma'$ and $d(x')\leq |x'-x'_0|$ for any $x'\in\{|x'|<r\}\setminus\Sigma'$. For small $\varepsilon>0$, by using \eqref{def-delta} and \eqref{assump-h}, we have
\begin{align}\label{int-lower-3}
\int_{|x'|<r}\left(\frac{1}{\varepsilon+d(x')^2}\right)^{p-1}\ dx'&\geq \frac{|\Sigma'|}{\varepsilon^{p-1}}+\frac{1}{C}\int_{\{|x'|<r\}\setminus\Sigma'}\left(\frac{1}{\varepsilon+d(x')^2}\right)^{p-1}\ dx'\nonumber\\
&\geq \frac{|\Sigma'|}{\varepsilon^{p-1}}+\frac{1}{C}\int_{Q}\left(\frac{1}{\varepsilon+|x'-x'_0|^2}\right)^{p-1}\ dx'\nonumber\\
&=\frac{|\Sigma'|}{\varepsilon^{p-1}}+\frac{|A|}{C}\int_{0}^{r_1}\frac{t^{n-2}}{(\varepsilon+t^2)^{p-1}}\ dt\nonumber\\
&\geq\frac{|\Sigma'|}{\varepsilon^{p-1}}+\frac{1}{\mathcal{C}_{2}}\begin{cases}
\varepsilon^{\frac{n+1}{2}-p}, & p>\frac{n+1}{2},\\
|\ln\varepsilon|, & p=\frac{n+1}{2},\\
1, & 1<p<\frac{n+1}{2},
\end{cases}
\end{align}
where $t=|x'-x'_0|$ and $A\subset\mathbb R^{n-2}$ with $|A|>C(n,\theta)>0$.
Hence, the case when $n\geq 3$ follows from \eqref{int-d-r} \eqref{int-delta}, \eqref{int-n-3}, \eqref{int-lower-3}, and \eqref{def-Theta}. Lemma \ref{lem-Theta} is proved.
\end{proof}

For any $r>0$, we denote 
\begin{equation*}
\Sigma_r^\varepsilon := \left\{ x \in \mathbb{R}^n : d(x')=r, ~-\frac{\varepsilon}{2} + h_2(x') < x_n < h_2(x') \right\}
\end{equation*}
and
\begin{equation}\label{def-gamma=dist}
\widetilde\Gamma_{-,r}^\varepsilon:=\left\{x_n=-\frac{\varepsilon}{2}+h_2(x'),~d(x')<r\right\},
\end{equation}
where $d(x')$ is defined in \eqref{def-dist}.

\begin{prop}\label{prop-converg-F}
Let $n\geq 2$, $h_1$ and $h_2$ be two $C^2$ functions satisfying \eqref{assump-h}, $p>1$, $\varepsilon\in(0,1)$, and $u_\varepsilon\in W^{1,p}(\cD)$ be a weak solution of \eqref{p-laplace}. Then there exist constants $C_1,C_2>0$ depending on $n,p,c_1$, $c_2$, and $\|\varphi\|_{L^\infty(\partial\cD)}$,  such that
\begin{equation*}
\left|\lim_{\varepsilon\rightarrow0}\int_{\widetilde\Gamma_{-,r}^\varepsilon}|Du_\varepsilon|^{p-2}Du_\varepsilon\cdot\nu-\mathcal{F}\right|\leq C_1e^{-\frac{C_2}{r}},\quad r\in(0,1),
\end{equation*}
where $\mathcal{F}$ is given in \eqref{def-F}.
\end{prop}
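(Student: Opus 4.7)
The plan is to exploit the flux-conservation identity in \eqref{p-laplace} to replace the integral on $\widetilde\Gamma_{-,r}^\varepsilon$ by one on the complementary piece of $\partial\cD_2^\varepsilon$, which sits at distance at least $r$ from the contact set $\Sigma$. First I would use the third equation of \eqref{p-laplace} with $i=2$ to write
\begin{equation*}
\int_{\widetilde\Gamma_{-,r}^\varepsilon}|Du_\varepsilon|^{p-2}Du_\varepsilon\cdot\nu
=-\int_{\partial\cD_2^\varepsilon\setminus\widetilde\Gamma_{-,r}^\varepsilon}|Du_\varepsilon|^{p-2}Du_\varepsilon\cdot\nu.
\end{equation*}
Since $\partial\cD_2^\varepsilon\setminus\widetilde\Gamma_{-,r}^\varepsilon\subset\{d(x')\geq r\}$ lies a positive distance from $\Sigma$, I can view $\partial\cD_2^\varepsilon$ as the translate of $\partial\cD_2^0$ by $(0',-\varepsilon/2)$ and use the $C^{1,\beta}$ convergence \eqref{strong-converg} of $u_\varepsilon\to u_0$ on compact subsets away from $\Sigma$ to pass to the limit, obtaining
\begin{equation*}
\lim_{\varepsilon\to 0}\int_{\widetilde\Gamma_{-,r}^\varepsilon}|Du_\varepsilon|^{p-2}Du_\varepsilon\cdot\nu
=-\int_{\partial\cD_2^0\setminus\widetilde\Gamma_{-,r}^0}|Du_0|^{p-2}Du_0\cdot\nu.
\end{equation*}

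Next, Theorem \ref{equiv-u0} tells me that the minimizer $u_0$ satisfies \eqref{varep=0}, so $\int_{\partial\cD_1^0\cup\partial\cD_2^0}|Du_0|^{p-2}Du_0\cdot\nu=0$, and hence $\int_{\partial\cD_2^0}|Du_0|^{p-2}Du_0\cdot\nu=-\mathcal{F}$. Splitting the surface $\partial\cD_2^0=(\partial\cD_2^0\setminus\widetilde\Gamma_{-,r}^0)\cup\widetilde\Gamma_{-,r}^0$ then gives
\begin{equation*}
\lim_{\varepsilon\to 0}\int_{\widetilde\Gamma_{-,r}^\varepsilon}|Du_\varepsilon|^{p-2}Du_\varepsilon\cdot\nu
=\mathcal{F}+\int_{\widetilde\Gamma_{-,r}^0}|Du_0|^{p-2}Du_0\cdot\nu,
\end{equation*}
so the proposition reduces to the estimate $\bigl|\int_{\widetilde\Gamma_{-,r}^0}|Du_0|^{p-2}Du_0\cdot\nu\bigr|\leq C_1 e^{-C_2/r}$.

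For this reduction I would apply Lemma \ref{lem-Dv}(i) at $\varepsilon=0$ to $v:=u_0-U_0$, which solves \eqref{eq-v-p} in $\Omega_1^0$ and vanishes on $\Gamma_\pm^0$. Combined with the uniform bounds $|u_0|+|U_0|\leq C\|\varphi\|_{L^\infty(\partial\cD)}$ from the maximum principle and \eqref{bdd-phi}, the lemma yields
\begin{equation*}
|Du_0(x)|\leq C_1 e^{-C_2/d(x')}\|\varphi\|_{L^\infty(\partial\cD)}\quad\text{for } x\in\overline{\Omega_{1/3}^0}.
\end{equation*}
Because $s\mapsto e^{-C_2/s}$ is monotone increasing and $d(x')<r$ on $\widetilde\Gamma_{-,r}^0$, this gives $|Du_0|\leq C e^{-C_2/r}\|\varphi\|_{L^\infty(\partial\cD)}$ uniformly on $\widetilde\Gamma_{-,r}^0$. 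Integrating the $(p-1)$-th power over the bounded surface $\widetilde\Gamma_{-,r}^0$ produces the required exponential decay, after absorbing constant factors and the $\|\varphi\|_{L^\infty}$-dependence into $C_1$.

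The main obstacle will be justifying the $\varepsilon\to 0$ limit in step two: the surfaces $\partial\cD_2^\varepsilon\setminus\widetilde\Gamma_{-,r}^\varepsilon$ move with $\varepsilon$, and one must pass to the limit in the nonlinear integrand $|Du_\varepsilon|^{p-2}Du_\varepsilon$ viewed as a trace from $\Omega^\varepsilon$. Handling this cleanly will require combining the interior $C^{1,\beta}$ convergence with local up-to-the-boundary $C^{1,\beta}$-regularity of $u_\varepsilon$ away from $\Sigma$, along with a uniform graph parameterization of $\partial\cD_2^\varepsilon$ to transfer the surface integral onto a fixed reference surface. A minor secondary issue is ensuring Lemma \ref{lem-Dv}(i) covers the full set $\widetilde\Gamma_{-,r}^0$ in case $R_0$ does not satisfy the hypothesis $R_0\in(0,1/3)$ of \eqref{sigma'} after rescaling; if not, a finite covering by shifted reference cylinders together with a pigeonhole argument will close this gap.
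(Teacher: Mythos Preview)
Your overall strategy is sound and leads to exactly the same reduction as the paper: both arguments boil down to showing
\[
\lim_{\varepsilon\to 0}\int_{\widetilde\Gamma_{-,r}^\varepsilon}|Du_\varepsilon|^{p-2}Du_\varepsilon\cdot\nu
=\mathcal{F}+\int_{\widetilde\Gamma_{-,r}^0}|Du_0|^{p-2}Du_0\cdot\nu,
\]
and then bounding the last integral by $C_1e^{-C_2/r}$ via the decay of $Du_0$ on $\widetilde\Gamma_{-,r}^0$. Your treatment of the exponential term is essentially the paper's as well.

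Where you diverge is in proving the displayed limit, and here the paper's route is noticeably cleaner. Your plan routes the flux through the \emph{moving} boundary $\partial\cD_2^\varepsilon\setminus\widetilde\Gamma_{-,r}^\varepsilon$, and you correctly flag that \eqref{strong-converg} as stated applies only on compacta away from \emph{all} the inclusions $\cup_{0<\varepsilon\le\varepsilon_0}(\cD_1^\varepsilon\cup\cD_2^\varepsilon)$, so it does not directly deliver convergence of the boundary trace $|Du_\varepsilon|^{p-2}Du_\varepsilon\cdot\nu$ on $\partial\cD_2^\varepsilon$. Closing that gap via uniform up-to-the-boundary $C^{1,\beta}$ estimates plus a graph parameterization is feasible but adds real work.

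The paper avoids the moving-boundary issue entirely by introducing a \emph{fixed} smooth surface $\eta$ (independent of $\varepsilon$) such that $\widetilde\Gamma_{-,r}^0\cup\eta$ encloses $\cD_1^\varepsilon$. Then $\widetilde\Gamma_{-,r}^\varepsilon\cup\Sigma_r^\varepsilon\cup\eta$ (with $\Sigma_r^\varepsilon$ the thin lateral collar $\{d(x')=r,\ -\tfrac\varepsilon2+h_2(x')<x_n<h_2(x')\}$) also encloses $\cD_1^\varepsilon$. The divergence theorem applied to $u_0$ and to $u_\varepsilon$ on these two enclosing surfaces gives
\[
-\int_{\widetilde\Gamma_{-,r}^0}|Du_0|^{p-2}Du_0\cdot\nu+\int_\eta|Du_0|^{p-2}Du_0\cdot\nu=\mathcal{F},
\]
\[
-\int_{\widetilde\Gamma_{-,r}^\varepsilon}|Du_\varepsilon|^{p-2}Du_\varepsilon\cdot\nu+\int_{\Sigma_r^\varepsilon}|Du_\varepsilon|^{p-2}Du_\varepsilon\cdot\nu+\int_\eta|Du_\varepsilon|^{p-2}Du_\varepsilon\cdot\nu=0.
\]
Since $\eta$ is fixed and lies in the interior of $\Omega^\varepsilon$ for all small $\varepsilon$, \eqref{strong-converg} applies directly to give $\int_\eta|Du_\varepsilon|^{p-2}Du_\varepsilon\cdot\nu\to\int_\eta|Du_0|^{p-2}Du_0\cdot\nu$. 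The collar term is disposed of by a crude bound: $|\Sigma_r^\varepsilon|=O(\varepsilon)$ and $|Du_\varepsilon|\le C(\varepsilon+r^2)^{-1}$ there by \eqref{asym-Du}, so $\int_{\Sigma_r^\varepsilon}\to 0$. Subtracting the two identities then yields the displayed limit immediately. This device---replacing $\partial\cD_2^\varepsilon\setminus\widetilde\Gamma_{-,r}^\varepsilon$ by a fixed interior surface $\eta$ joined to $\widetilde\Gamma_{-,r}^\varepsilon$ by a vanishing collar---is exactly what removes your ``main obstacle.''
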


\begin{proof}
For small $r\in (0, 1)$ and small $\varepsilon>0$, we take a smooth surface $\eta$ such that $\cD_1^\varepsilon$ is surrounded by $\widetilde\Gamma_{-,r}^0 \cup \eta$. 
Using the definition of $\mathcal{F}$ in \eqref{def-F} and  by integration by parts, we have
\[
-\int_{\widetilde\Gamma_{-,r}^0} |Du_0|^{p-2} Du_0 \cdot\nu + \int_\eta |Du_0|^{p-2} Du_0 \cdot \nu= \mathcal{F}.
\]
Similarly, from the third condition in \eqref{p-laplace}, we obtain 
\[
-\int_{\widetilde\Gamma_{-,r}^\varepsilon} |Du_\varepsilon|^{p-2} Du_\varepsilon \cdot \nu + \int_{\Sigma_r^\varepsilon} |Du_\varepsilon|^{p-2} Du_\varepsilon \cdot \nu + \int_\eta |Du_\varepsilon|^{p-2} Du_\varepsilon \cdot \nu = 0.
\]
By using \eqref{strong-converg}, we have 
\begin{equation*}
\lim_{\varepsilon\rightarrow0}\int_\eta |Du_\varepsilon|^{p-2} Du_\varepsilon \cdot \nu=\int_\eta |Du_0|^{p-2} Du_0 \cdot \nu.
\end{equation*}
By using \eqref{asym-Du} and the second condition in \eqref{varep=0}, we have 
\[
|Du_0(x)| \leq C_1 e^{-\frac{C_2}{r}} \quad\text{ in } ~\widetilde\Gamma_{-,r}^0.
\]
This implies that
\begin{align*}
\left|\int_{\widetilde\Gamma_{-,r}^0} |Du_0|^{p-2} Du_0 \cdot \nu \right| \leq C_1 e^{-\frac{C_2}{r}}.
\end{align*}
In view of \eqref{asym-Du}, and using \eqref{sigma'}, we derive 
\begin{align*}
\left|\int_{\Sigma_r^\varepsilon} |Du_\varepsilon|^{p-2} Du_\varepsilon \cdot \nu \right|&\leq C\int_{d(x')=r}\int_{-\frac{\varepsilon}{2} + h_2(x')}^{h_2(x')}\left(\frac{1}{\varepsilon+d(x')^2}\right)^{p-1}\ dx_ndx'\\
&\leq C\varepsilon\int_{d(x')=r}\left(\frac{1}{\varepsilon+d(x')^2}\right)^{p-1}\ dx'\\
&\leq \frac{C\varepsilon r^{n-2}}{(\varepsilon+r^{2})^{p-1}}\rightarrow0\quad\mbox{as}\quad \varepsilon\rightarrow0_+.
\end{align*}
Thus, 
\begin{equation*}
\left|\lim_{\varepsilon\rightarrow0_+}\int_{\widetilde\Gamma_{-,r}^\varepsilon}|Du_\varepsilon|^{p-2}Du_\varepsilon\cdot\nu-\mathcal{F}\right|\leq C_1e^{-\frac{C_2}{r}}.
\end{equation*}
The proof is finished.
\end{proof}

We are in a position to establish the estimate of $U_1^\varepsilon-U_2^\varepsilon$.

\begin{theorem}\label{thm-U12}
Let $p>1$, $U_1^\varepsilon$ and $U_2^\varepsilon$ be the constants in \eqref{p-laplace}. Then we have
\begin{equation}\label{lim-est-u1u2}
\lim_{\varepsilon\rightarrow0}\frac{U_1^{\varepsilon}-U_2^{\varepsilon}}{\Theta(\varepsilon;p)}=\text{sgn}(\mathcal{F})|\mathcal{F}|^{\frac{1}{p-1}},
\end{equation}
where $\mathcal{F}$ and $\Theta(\varepsilon;p)$ are defined in \eqref{def-F} and \eqref{def-Theta}, respectively.
\end{theorem}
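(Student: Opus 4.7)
The plan is to compute the flux $\int_{\widetilde\Gamma_{-,r}^\varepsilon} |Du_\varepsilon|^{p-2} Du_\varepsilon \cdot \nu \, dS$ in two different ways and compare. On one hand, Proposition~\ref{prop-converg-F} tells us this quantity equals $\mathcal{F} + O(e^{-C_2/r})$ in the limit $\varepsilon \to 0^+$. On the other hand, I would substitute the asymptotic expansion \eqref{asym-Du}, writing $Du_\varepsilon = V + {\bf f}_1$ with $V := (0', (U_1^\varepsilon - U_2^\varepsilon)/\delta(x'))$, and extract a main term expressible in terms of $U_1^\varepsilon - U_2^\varepsilon$ and $\Theta(\varepsilon;p)$.

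For the leading-order term, I parametrize $\widetilde\Gamma_{-,r}^\varepsilon$ by $x' \in \{d(x') < r\}$ via $x_n = -\varepsilon/2 + h_2(x')$, giving $\nu\,dS = (-\nabla_{x'} h_2, 1)\, dx'$. Since the first $n-1$ components of $V$ vanish, the $\nabla h_2$ piece drops out cleanly, and
\[
\int_{\widetilde\Gamma_{-,r}^\varepsilon} |V|^{p-2} V \cdot \nu \, dS = \text{sgn}(U_1^\varepsilon - U_2^\varepsilon)\, |U_1^\varepsilon - U_2^\varepsilon|^{p-1} \int_{\{d(x')<r\}} \delta(x')^{-(p-1)}\, dx'.
\]
By Lemma~\ref{lem-Theta}, this equals $\text{sgn}(U_1^\varepsilon - U_2^\varepsilon)\, |(U_1^\varepsilon - U_2^\varepsilon)/\Theta(\varepsilon;p)|^{p-1}\,(1+o(1))$ as $\varepsilon \to 0^+$.

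Next, I would estimate the error $\int_{\widetilde\Gamma_{-,r}^\varepsilon} (|Du_\varepsilon|^{p-2} Du_\varepsilon - |V|^{p-2} V) \cdot \nu\, dS$ by the standard monotonicity inequalities $||a|^{p-2} a - |b|^{p-2} b| \leq C(|a|+|b|)^{p-2}|a-b|$ for $p\geq 2$ and $\leq C |a-b|^{p-1}$ for $1<p<2$. Applied to the bound on ${\bf f}_1$ from \eqref{asym-Du}, the leading piece of size $|U_1^\varepsilon - U_2^\varepsilon|\,\delta(x')^{\beta/2-1} = \delta(x')^{\beta/2}|V|$ contributes an error of relative order $O(r^\beta + \varepsilon^{\beta/2})$, while the exponential piece integrates to a negligible quantity. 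An a priori bound on $|U_1^\varepsilon - U_2^\varepsilon|/\Theta(\varepsilon;p)$ follows by contradiction: if it blew up along a subsequence, the leading term would blow up too, contradicting the uniform bound of the flux coming from Proposition~\ref{prop-converg-F}.

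Combining everything, for each small $r > 0$,
\[
\limsup_{\varepsilon \to 0^+} \left| \text{sgn}(U_1^\varepsilon - U_2^\varepsilon)\left|\frac{U_1^\varepsilon - U_2^\varepsilon}{\Theta(\varepsilon;p)}\right|^{p-1} - \mathcal{F} \right| \leq C(r^\beta + e^{-C_2/r}).
\]
Letting $r \to 0^+$ gives $\lim_{\varepsilon\to 0^+} \text{sgn}(U_1^\varepsilon - U_2^\varepsilon)|(U_1^\varepsilon - U_2^\varepsilon)/\Theta(\varepsilon;p)|^{p-1} = \mathcal{F}$, and inverting the homeomorphism $y \mapsto \text{sgn}(y)|y|^{p-1}$ of $\mathbb{R}$ produces \eqref{lim-est-u1u2}. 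The main obstacle is the error analysis for ${\bf f}_1$: it requires separate treatment of $p \geq 2$ and $1 < p < 2$, together with the uniform a priori control of $|U_1^\varepsilon - U_2^\varepsilon|/\Theta(\varepsilon;p)$ needed to ensure that the $\delta(x')^{\beta/2}$ gain genuinely dominates the $\delta(x')^{-(p-1)}$ singularity in the integrand.
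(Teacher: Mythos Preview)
Your proposal is correct and follows essentially the same route as the paper: both compute the flux $\int_{\widetilde\Gamma_{-,r}^\varepsilon}|Du_\varepsilon|^{p-2}Du_\varepsilon\cdot\nu$ via the expansion \eqref{asym-Du}, identify the leading term through Lemma~\ref{lem-Theta}, match it against $\mathcal{F}$ via Proposition~\ref{prop-converg-F}, and then let $r\to 0$. The only technical differences are in the bookkeeping: the paper fixes the sign by passing to subsequences with $U_1^{\varepsilon_j}\ge U_2^{\varepsilon_j}$ and uses a Young-type parameter $\kappa$ to obtain pointwise two-sided bounds $(1\pm\kappa)\bigl(\tfrac{U_1^\varepsilon-U_2^\varepsilon}{\delta}\bigr)^{p-1}(1\pm C\delta^{\beta/2})\mp C(\kappa)e^{-C_2/(\sqrt\varepsilon+d)}$ on $|Du_\varepsilon|^{p-2}Du_\varepsilon\cdot\nu$, which avoids splitting on $p\ge 2$ versus $1<p<2$ and dispenses with a separate a~priori bound on $|U_1^\varepsilon-U_2^\varepsilon|/\Theta(\varepsilon;p)$; you instead bound the vector difference $|Du_\varepsilon|^{p-2}Du_\varepsilon-|V|^{p-2}V$ and close with the homeomorphism $y\mapsto\operatorname{sgn}(y)|y|^{p-1}$.
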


\begin{proof}
It follows from \eqref{asym-Du} that
\begin{equation}\label{Du-nu}
\left|Du_\varepsilon\cdot\nu(y)-\frac{U_1^\varepsilon-U_2^\varepsilon}{\delta(y')}\right|\leq C_1\left(\frac{|U_1^\varepsilon-U_2^\varepsilon|}{\delta(y')^{1-\beta/2}}+e^{-\frac{C_2}{\sqrt\varepsilon+d(y')}}\|\varphi\|_{L^\infty(\partial\cD)}\right),\quad y\in \Gamma_{-,1/4}^\varepsilon.
\end{equation}
Without loss of generality, we shall let $\{\varepsilon_j\}_{j\in\mathbb N}$ be a decreasing sequence such that $\varepsilon_j\rightarrow0$ as $j\rightarrow\infty$ and $U_1^{\varepsilon_j}\geq U_2^{\varepsilon_j}$ for all $j\in\mathbb N$. Then by using \eqref{Du-nu}, we obtain, for any $\kappa\in(0,1)$, $r\in(0,1/4)$, and $y\in \Gamma_{-,1/4}^\varepsilon$,
\begin{align*}
&(1-\kappa)\left(\frac{U_1^{\varepsilon_j}-U_2^{\varepsilon_j}}{\delta_j(y')}\right)^{p-1}(1-C\delta_j(y')^{\beta/2})-C(\kappa)e^{-\frac{C_2}{\sqrt{\varepsilon_j}+d(y')}}\leq |Du_{\varepsilon_j}|^{p-2}Du_{\varepsilon_j}\cdot\nu(y)\\
&\leq (1+\kappa)\left(\frac{U_1^{\varepsilon_j}-U_2^{\varepsilon_j}}{\delta_j(y')}\right)^{p-1}(1+C\delta_j(y')^{\beta/2})+C(\kappa)e^{-\frac{C_2}{\sqrt{\varepsilon_j}+d(y')}},
\end{align*}
where $\delta_j(y'):=\varepsilon_j+h_1(y')-h_2(y')$, $C$, $C_2>0$ depend on $n,p,c_1,c_2$, $\beta$, $\|\varphi\|_{L^\infty(\partial\cD)}$, and $\mbox{dist}(\cD_1^{\varepsilon_j}\cup \cD_2^{\varepsilon_j},\partial \cD)$, and $C(\kappa)$ additionally depends on $\kappa$. For any $r\in(0,1/4)$, by using \eqref{assump-h}, we have 
\begin{align}\label{int-U1U2}
&(1-\kappa)\int_{d(y')<r}\left(\frac{U_1^{\varepsilon_j}-U_2^{\varepsilon_j}}{\delta_j(y')}\right)^{p-1}(1-C\delta_j(y')^{\beta/2})\sqrt{1+|D_{y'}h_2(y')|^2}\ dy'\nonumber\\
&\quad-C(\kappa)\int_{d(y')<r}e^{-\frac{C_2}{\sqrt{\varepsilon_j}+d(y')}}\ dy'\nonumber\\
&\leq\int_{\widetilde\Gamma_{-,r}^{\varepsilon_j}}|Du_{\varepsilon_j}|^{p-2}Du_{\varepsilon_j}\cdot\nu\ dS\nonumber\\
&\leq(1+\kappa)\int_{d(y')<r}\left(\frac{U_1^{\varepsilon_j}-U_2^{\varepsilon_j}}{\delta_j(y')}\right)^{p-1}(1+C\delta_j(y')^{\beta/2})\sqrt{1+|D_{y'}h_2(y')|^2}\ dy'\nonumber\\
&\quad+C(\kappa)\int_{d(y')<r}e^{-\frac{C_2}{\sqrt{\varepsilon_j}+d(y')}}\ dy'.
\end{align}
Taking the limit as $j\rightarrow\infty$ in \eqref{int-U1U2} first, then taking $r\rightarrow0$ and $\kappa\rightarrow0$, combining Lemma \ref{lem-Theta} and  Proposition \ref{prop-converg-F},  we derive 
\begin{equation}\label{u1-u2-est}
\lim_{j\rightarrow\infty}\left(\frac{U_1^{\varepsilon_j}-U_2^{\varepsilon_j}}{\Theta(\varepsilon_j;p)}\right)^{p-1}=\mathcal{F}.
\end{equation}
Similarly, if $\{\varepsilon_j\}_{j\in\mathbb N}$ is a decreasing sequence such that $\varepsilon_j\rightarrow0$ as $j\rightarrow\infty$ and $U_1^{\varepsilon_j}\leq U_2^{\varepsilon_j}$ for all $j\in\mathbb N$, then we have 
\begin{equation}\label{u2-u1-est}
\lim_{j\rightarrow\infty}\left(\frac{U_2^{\varepsilon_j}-U_1^{\varepsilon_j}}{\Theta(\varepsilon_j;p)}\right)^{p-1}=-\mathcal{F}.
\end{equation}
Therefore, if $\mathcal{F}>0$, for any decreasing sequence $\varepsilon_j\rightarrow0_+$ as $j\rightarrow\infty$, there exists $j_0\in\mathbb N$ such that for any $j\geq j_0$, we have $U_1^{\varepsilon_j}\geq U_2^{\varepsilon_j}$. Thus,  it follows from \eqref{u1-u2-est} that \eqref{lim-est-u1u2} holds. If $\mathcal{F}<0$, we obtain \eqref{lim-est-u1u2} from \eqref{u2-u1-est}. If $\mathcal{F}=0$, we let $\{\varepsilon_j\}_{j\in\mathbb N}$ be any sequence such that $\varepsilon_j\rightarrow0$ as $j\rightarrow\infty$. Then there exists a decreasing subsequence $\{\varepsilon_{j_k}\}$ such that either $U_1^{\varepsilon_{j_k}}\geq U_2^{\varepsilon_{j_k}}$ holds for all $k\in\mathbb N$ or $U_1^{\varepsilon_{j_k}}\leq U_2^{\varepsilon_{j_k}}$ holds for all $k\in\mathbb N$. This together with \eqref{u1-u2-est} or \eqref{u2-u1-est} with $\mathcal{F}=0$ implies our desired result.
The proof of Theorem \ref{thm-U12} is finished.
\end{proof}

Now, we are ready to prove Theorem \ref{thm-flat}.
\begin{proof}[Proof of Theorem \ref{thm-flat}]
Denote 
\begin{equation*}
f_1(\varepsilon):=\frac{U_1^\varepsilon-U_2^\varepsilon}{\Theta(\varepsilon;p)}-\text{sgn}(\mathcal{F})|\mathcal{F}|^{\frac{1}{p-1}}.
\end{equation*}
It follows from Theorem \ref{thm-U12} that
\begin{align*}
\lim_{\varepsilon\rightarrow0}f_1(\varepsilon)=0.
\end{align*}
Combining \eqref{def-Theta} and \eqref{asym-Du}, we have 
\begin{align*}
Du_{\varepsilon}=\left(0',\frac{\Theta(\varepsilon;p)}{\delta(x')}\big(\text{sgn}(\mathcal{F})|\mathcal{F}|^{\frac{1}{p-1}}+f_1(\varepsilon)\big)\right)
+{\bf f}_1(x,\varepsilon),
\end{align*}
where 
$$|{\bf f}_1(x,\varepsilon)|\leq C_1\left(\frac{\Theta(\varepsilon;p)}{\delta(x')^{1-\beta/2}}\big(|\mathcal{F}|^{\frac{1}{p-1}}+|f_1(\varepsilon)|\big)+e^{-\frac{C_2}{\sqrt\varepsilon+d(x')}}\|\varphi\|_{L^\infty(\partial\cD)}\right),$$
$\beta\in(0,1)$, $C_1>0$ and $C_2>0$ depend on $n,p,c_1$, and $c_2$.
We complete the proof of Theorem \ref{thm-flat}. 
\end{proof}

\section{Proof of Theorem \ref{thm-gamma}}\label{sec-thm2}
This section is devoted to the proof of Theorem \ref{thm-gamma}. 

\begin{lemma}\label{lem-Theta-gamma}
Let $p\geq \frac{n+\gamma}{1+\gamma}$, $n\geq 2$, and $r\in(0,1)$. We have
\begin{align*}
\frac{1}{K}c_0^{-\frac{n-1}{1+\gamma}}&\leq\liminf_{\varepsilon\rightarrow0}\int_{|x'|<r}\left(\frac{\Theta(\varepsilon;p,\gamma)}{\delta(x')}\right)^{p-1}\ dx'\\
&\leq\limsup_{\varepsilon\rightarrow0}\int_{|x'|<r}\left(\frac{\Theta(\varepsilon;p,\gamma)}{\delta(x')}\right)^{p-1}\ dx'\leq\frac{1}{K}c_0^{\frac{n-1}{1+\gamma}},
\end{align*}
where $K$, $c_0$, $\delta(x')$, and $\Theta(\varepsilon;p,\gamma)$  are defined in \eqref{def-K0}, \eqref{est-a}, \eqref{def-delta}, and \eqref{def-Theta-gamma}, respectively.
\end{lemma}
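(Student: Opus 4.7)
The plan is to sandwich the integrand using the uniform bounds on $a$ and then reduce everything to a one-dimensional integral that can be evaluated (or estimated) asymptotically via a Beta-function type substitution.

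From \eqref{delta-asym} and \eqref{est-a}, we have, for all $x'$,
\begin{equation*}
\frac{1}{\varepsilon+c_0|x'|^{1+\gamma}}\leq\frac{1}{\delta(x')}\leq\frac{1}{\varepsilon+c_0^{-1}|x'|^{1+\gamma}},
\end{equation*}
so it suffices to show that, for any fixed constant $A>0$,
\begin{equation*}
\lim_{\varepsilon\to 0}\int_{|x'|<r}\left(\frac{\Theta(\varepsilon;p,\gamma)}{\varepsilon+A|x'|^{1+\gamma}}\right)^{p-1}\,dx'=\frac{1}{K\,A^{(n-1)/(1+\gamma)}}.
\end{equation*}
Then taking $A=c_0$ yields the lower bound, and $A=c_0^{-1}$ yields the upper bound.

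To evaluate the one-parameter integral, first pass to polar coordinates on $\mathbb{R}^{n-1}$: with $\omega_{n-1}=\frac{2\pi^{(n-1)/2}}{\Gamma((n-1)/2)}$,
\begin{equation*}
\int_{|x'|<r}\frac{dx'}{(\varepsilon+A|x'|^{1+\gamma})^{p-1}}=\omega_{n-1}\int_0^r\frac{s^{n-2}}{(\varepsilon+As^{1+\gamma})^{p-1}}\,ds.
\end{equation*}
Next I would make the substitution $t=As^{1+\gamma}/\varepsilon$, under which the one-dimensional integral becomes
\begin{equation*}
\frac{\varepsilon^{(n-1)/(1+\gamma)-(p-1)}}{(1+\gamma)\,A^{(n-1)/(1+\gamma)}}\int_0^{Ar^{1+\gamma}/\varepsilon}\frac{t^{(n-1)/(1+\gamma)-1}}{(1+t)^{p-1}}\,dt,
\end{equation*}
so that the $\varepsilon$-power factor is exactly $\Theta(\varepsilon;p,\gamma)^{-(p-1)}$ in the supercritical case, and appears as $1$ in the critical case.

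The last step is to take $\varepsilon\to 0$ inside the $t$-integral. When $p>\frac{n+\gamma}{1+\gamma}$, the integrand decays like $t^{-1-(p-(n+\gamma)/(1+\gamma))}$ at infinity, so the integral converges to the Beta function
\begin{equation*}
B\!\left(\frac{n-1}{1+\gamma},\,p-\frac{n+\gamma}{1+\gamma}\right)=\frac{\Gamma(\frac{n-1}{1+\gamma})\,\Gamma(p-\frac{n+\gamma}{1+\gamma})}{\Gamma(p-1)},
\end{equation*}
and after multiplying by $\omega_{n-1}$ and the $\Theta^{p-1}$ prefactor, the constant collapses to $1/(KA^{(n-1)/(1+\gamma)})$ by the very definition \eqref{def-K0} of $K$. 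When $p=\frac{n+\gamma}{1+\gamma}$, the integrand is asymptotically $1/t$ near infinity, so the truncated integral is $|\ln\varepsilon|+O(1)$; this is precisely cancelled by $\Theta(\varepsilon;p,\gamma)^{p-1}=|\ln\varepsilon|^{-1}$, and the limit again gives $1/(KA^{(n-1)/(1+\gamma)})$ with $K$ in its critical form.

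The only delicate point is justifying passage to the limit in the Beta integral uniformly in the $\varepsilon$-dependent upper cutoff, but since the integrand is nonnegative and the limit is the monotone increasing limit of truncations, dominated convergence or monotone convergence applies in the supercritical case, while the critical case follows from elementary logarithmic asymptotics. Putting the two bounds $A=c_0^{\pm1}$ together then gives the stated liminf/limsup inequalities.
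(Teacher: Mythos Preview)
Your proof is correct and follows essentially the same approach as the paper: both sandwich $\delta(x')$ via the two-sided bound \eqref{est-a} on $a$, reduce to a radial integral by polar coordinates and the scaling $t=As^{1+\gamma}/\varepsilon$ (the paper uses the equivalent $s\mapsto s\varepsilon^{1/(1+\gamma)}$), and then identify the supercritical limit as a Beta integral while treating the critical case by the $\sim 1/t$ tail asymptotics. The only cosmetic difference is that you sandwich with a constant $A$ before scaling, whereas the paper carries $a(x')$ through the change of variables and sandwiches afterward; your critical-case argument is also slightly more compressed than the paper's explicit $\mathrm{I}+\mathrm{II}$ decomposition, but the content is the same.
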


\begin{proof}
By using \eqref{delta-asym} and a suitable change of variables, we have 
\begin{align}\label{int-delta-gamma}
\int_{|x'|<r}\left(\frac{1}{\delta(x')}\right)^{p-1}\ dx'&=\int_{|x'|<r}\frac{1}{(\varepsilon+a(x')|x'|^{1+\gamma})^{p-1}}\ dx'\nonumber\\
&=\varepsilon^{\frac{n+\gamma}{1+\gamma}-p}\int_{\mathbb{S}^{n-2}}\int_{0}^{r\varepsilon^{-\frac{1}{1+\gamma}}}\frac{s^{n-2}}{(1+a(s\varepsilon^{\frac{1}{1+\gamma}}\theta)s^{1+\gamma})^{p-1}}\ dsd\sigma(\theta),
\end{align}
where $|\mathbb{S}^{n-2}|=\frac{2\pi^{\frac{n-1}{2}}}{\Gamma(\frac{n-1}{2})}$. By using \eqref{est-a}, we have 
\begin{align}\label{int-K}
\omega_{n-2}c_0^{-\frac{n-1}{1+\gamma}}\int_{0}^{r(c_0^{-1}\varepsilon)^{-\frac{1}{1+\gamma}}}\frac{s^{n-2}}{(1+s^{1+\gamma})^{p-1}}\ ds&\leq \int_{\mathbb{S}^{n-2}}\int_{0}^{r\varepsilon^{-\frac{1}{1+\gamma}}}\frac{s^{n-2}}{(1+a(s\varepsilon^{\frac{1}{1+\gamma}}\theta)s^{1+\gamma})^{p-1}}\ dsd\sigma(\theta)\nonumber\\
&\leq \omega_{n-2}c_0^{\frac{n-1}{1+\gamma}}\int_{0}^{r(c_0\varepsilon)^{-\frac{1}{1+\gamma}}}\frac{s^{n-2}}{(1+s^{1+\gamma})^{p-1}}\ ds,
\end{align}
where $\omega_{n-2}=|\mathbb{S}^{n-2}|=\frac{2\pi^{\frac{n-1}{2}}}{\Gamma(\frac{n-1}{2})}$, and $c_0>1$ is a constant given in \eqref{est-a}. 

When $p>\frac{n+\gamma}{1+\gamma}$, $\Theta(\varepsilon;p,\gamma)=
\varepsilon^{1-\frac{n-1}{(1+\gamma)(p-1)}}$. Then we have 
\begin{align*}
&\omega_{n-2}\lim_{\varepsilon\rightarrow0}\int_{0}^{(c_0r)\varepsilon^{-\frac{1}{1+\gamma}}}\frac{s^{n-2}}{(1+s^{1+\gamma})^{p-1}}\ ds=\omega_{n-2}\int_0^{\infty}\frac{s^{n-2}}{(1+s^{1+\gamma})^{p-1}}\ ds\nonumber\\
&=\frac{\omega_{n-2}}{1+\gamma}\int_0^{\infty}\frac{s^{\frac{n-2-\gamma}{1+\gamma}}}{(1+s)^{p-1}}\ ds=\frac{\omega_{n-2}}{1+\gamma}B\left(\frac{n-1}{1+\gamma},p-\frac{n+\gamma}{1+\gamma}\right)\nonumber\\
&=\frac{2\pi^{\frac{n-1}{2}}}{1+\gamma}\cdot\frac{\Gamma(\frac{n-1}{1+\gamma})\Gamma(p-\frac{n+\gamma}{1+\gamma})}{\Gamma(\frac{n-1}{2})\Gamma(p-1)},
\end{align*}
where $B$ is the beta function. Thus, we obtain from \eqref{int-delta-gamma} and \eqref{int-K} that 
\begin{align*}
&\frac{2\pi^{\frac{n-1}{2}}}{c_0^{\frac{n-1}{1+\gamma}}(1+\gamma)}\cdot\frac{\Gamma(\frac{n-1}{1+\gamma})\Gamma(p-\frac{n+\gamma}{1+\gamma})}{\Gamma(\frac{n-1}{2})\Gamma(p-1)}
\leq\liminf_{\varepsilon\rightarrow0}\int_{|x'|<r}\left(\frac{\Theta(\varepsilon;p,\gamma)}{\delta(x')}\right)^{p-1}\ dx'\\
&\leq\limsup_{\varepsilon\rightarrow0}\int_{|x'|<r}\left(\frac{\Theta(\varepsilon;p,\gamma)}{\delta(x')}\right)^{p-1}\ dx'\leq\frac{2c_0^{\frac{n-1}{1+\gamma}}\pi^{\frac{n-1}{2}}}{1+\gamma}\cdot\frac{\Gamma(\frac{n-1}{1+\gamma})\Gamma(p-\frac{n+\gamma}{1+\gamma})}{\Gamma(\frac{n-1}{2})\Gamma(p-1)}.
\end{align*}

When $p=\frac{n+\gamma}{1+\gamma}$, $\Theta(\varepsilon;p,\gamma)=
|\ln\varepsilon|^{-\frac{1}{p-1}}$. Then we have 
\begin{align*}
&|\ln\varepsilon|^{-1}\int_0^{r(c_0\varepsilon)^{-\frac{1}{1+\gamma}}}\frac{s^{n-2}}{(1+s^{1+\gamma})^{\frac{n-1}{1+\gamma}}}\ ds
=|\ln\varepsilon|^{-1}\int_0^{r(c_0\varepsilon)^{-\frac{1}{1+\gamma}}}\frac{s^{\gamma}}{1+s^{1+\gamma}}\ ds\\
&\qquad+|\ln\varepsilon|^{-1}\int_0^{r(c_0\varepsilon)^{-\frac{1}{1+\gamma}}}\left(\frac{s^{n-2}}{(1+s^{1+\gamma})^{\frac{n-1}{1+\gamma}}}-\frac{s^{\gamma}}{1+s^{1+\gamma}}\right)\ ds=:\mbox{I}+\mbox{II}.
\end{align*}
Next, we estimate $\mbox{I}$ and $\mbox{II}$, respectively. It is clear that 
\begin{align*}
\lim_{\varepsilon\rightarrow0}\mbox{I}=\lim_{\varepsilon\rightarrow0}\frac{|\ln\varepsilon|^{-1}}{1+\gamma}\ln(1+r^{1+\gamma}(c_0\varepsilon)^{-1})=\frac{1}{1+\gamma}.
\end{align*}
For small $\varepsilon>0$, we divide $\mbox{II}$ as follows:
\begin{align}\label{est-II}
\mbox{II}&=|\ln\varepsilon|^{-1}\Bigg(\int_0^1\left(\frac{s^{n-2}}{(1+s^{1+\gamma})^{\frac{n-1}{1+\gamma}}}-\frac{s^{\gamma}}{1+s^{1+\gamma}}\right)\ ds\nonumber\\
&\quad+\int_1^{r(c_0\varepsilon)^{-\frac{1}{1+\gamma}}}\left(\frac{s^{n-2}}{(1+s^{1+\gamma})^{\frac{n-1}{1+\gamma}}}-\frac{s^{\gamma}}{1+s^{1+\gamma}}\right)\ ds\Bigg)=:\mbox{II}_1+\mbox{II}_2.
\end{align}
Note that 
\begin{align}\label{est-II1}
|\mbox{II}_1|\leq C|\ln\varepsilon|^{-1}
\end{align}
and 
\begin{equation*}
\mbox{II}_2=|\ln\varepsilon|^{-1}\int_1^{r(c_0\varepsilon)^{-\frac{1}{1+\gamma}}}\frac{s^\gamma\Big(s^{n-2-\gamma}-(1+s^{1+\gamma})^{\frac{n-2-\gamma}{1+\gamma}}\Big)}{(1+s^{1+\gamma})^{\frac{n-1}{1+\gamma}}}\ ds.
\end{equation*}
By using the mean value theorem, there exists a $\xi\in(s^{1+\gamma},1+s^{1+\gamma})\subset(s^{1+\gamma},2s^{1+\gamma})$ (when $s\geq1$), such that 
\begin{equation*}
s^{n-2-\gamma}-(1+s^{1+\gamma})^{\frac{n-2-\gamma}{1+\gamma}}=-\frac{n-2-\gamma}{1+\gamma}\xi^{\frac{n-3-2\gamma}{1+\gamma}}.
\end{equation*}
Thus, we have 
\begin{align}\label{est-II2}
 |\mbox{II}_2|\leq C|\ln\varepsilon|^{-1}\int_1^{\infty}\frac{s^{n-3-\gamma}}{(1+s^{1+\gamma})^{\frac{n-1}{1+\gamma}}}\ ds\leq C|\ln\varepsilon|^{-1}.
\end{align}
Substituting \eqref{est-II1} and \eqref{est-II2} into \eqref{est-II}, we derive 
\begin{equation*}
\lim_{\varepsilon\rightarrow0}\mbox{II}=0.
\end{equation*}
Therefore, we have 
\begin{equation*}
\omega_{n-2}\lim_{\varepsilon\rightarrow0}|\ln\varepsilon|^{-1}\int_0^{r(c_0\varepsilon)^{-\frac{1}{1+\gamma}}}\frac{s^{n-2}}{(1+s^{1+\gamma})^{\frac{n-1}{1+\gamma}}}\ ds=\frac{\omega_{n-2}}{1+\gamma}=\frac{2\pi^{\frac{n-1}{2}}}{(1+\gamma)\Gamma(\frac{n-1}{2})}
\end{equation*}
and 
\begin{align*}
\frac{2\pi^{\frac{n-1}{2}}}{c_0^{\frac{n-1}{1+\gamma}}(1+\gamma)\Gamma(\frac{n-1}{2})}
&\leq\liminf_{\varepsilon\rightarrow0}\int_{|x'|<r}\left(\frac{\Theta(\varepsilon;p,\gamma)}{\delta(x')}\right)^{p-1}\ dx'\\
&\leq\limsup_{\varepsilon\rightarrow0}\int_{|x'|<r}\left(\frac{\Theta(\varepsilon;p,\gamma)}{\delta(x')}\right)^{p-1}\ dx'\leq\frac{2c_0^{\frac{n-1}{1+\gamma}}\pi^{\frac{n-1}{2}}}{(1+\gamma)\Gamma(\frac{n-1}{2})}.
\end{align*} 
The proof is completed.
\end{proof}

Following the proof of Proposition \ref{prop-converg-F} with slight modifications, we have the result as follows.

\begin{prop}\label{prop-gamma}
Let $p\geq\frac{n+\gamma}{1+\gamma}$,  $n\geq 2$, $h_1$ and $h_2$ be two $C^{1,\gamma}$ functions satisfying \eqref{assump-h-gamma}, $\varepsilon\in(0,1)$, and let $u_\varepsilon\in W^{1,p}(\cD)$ be a weak solution of \eqref{p-laplace}. Then for any $r>0$, we have
\begin{equation*}
\left|\lim_{\varepsilon\rightarrow0_+}\int_{\Gamma_{-,r}^\varepsilon}|Du_\varepsilon|^{p-2}Du_\varepsilon\cdot\nu-\mathcal{F}\right|\leq C_1e^{-\frac{C_2}{r}},
\end{equation*}
where $\mathcal{F}$ is given in \eqref{def-F}, $C_1>0$ depends on $n,p,\gamma,c_3$, $c_4$, and $\|\varphi\|_{L^\infty(\partial\cD)}$, and $C_2>0$ depends on $n,p,c_3$, $c_4$, and $\|\varphi\|_{L^\infty(\partial\cD)}$.
\end{prop}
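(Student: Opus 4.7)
The plan is to adapt the proof of Proposition~\ref{prop-converg-F} to the $C^{1,\gamma}$ setting, with the substitutions ``$d(x')<r$'' replaced by ``$|x'|<r$'' and the decay bound from Lemma~\ref{lem-Dv}(i) replaced by the $C^{1,\gamma}$ analogue in Lemma~\ref{lem-Dv}(ii). The structural fact making this transplant possible is that, in the regime $p\ge\frac{n+\gamma}{1+\gamma}$, the limiting problem has a single constant $U_0$ on $\overline{\cD_1^0}\cup\overline{\cD_2^0}$, mirroring the flat case.

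First, I would fix $r\in(0,1)$ and small $\varepsilon>0$, and select a smooth cap $\eta$ such that $\eta$, together with $\Gamma_{-,r}^\varepsilon$ and the lateral piece
\[
\Sigma_r^\varepsilon := \{x\in\mathbb R^n : |x'|=r,\ -\tfrac{\varepsilon}{2}+h_2(x')<x_n<h_2(x')\},
\]
encloses $\cD_1^\varepsilon$. Applying the divergence theorem to $|Du_0|^{p-2}Du_0$ in the corresponding region with $\varepsilon=0$ and using $p$-harmonicity together with \eqref{def-F} gives
\[
-\int_{\Gamma_{-,r}^0}|Du_0|^{p-2}Du_0\cdot\nu+\int_\eta |Du_0|^{p-2}Du_0\cdot\nu=\mathcal{F}.
\]
The analogue for $u_\varepsilon$, using the no-flux condition on $\partial\cD_1^\varepsilon$, reads
\[
-\int_{\Gamma_{-,r}^\varepsilon}|Du_\varepsilon|^{p-2}Du_\varepsilon\cdot\nu+\int_{\Sigma_r^\varepsilon}|Du_\varepsilon|^{p-2}Du_\varepsilon\cdot\nu+\int_\eta |Du_\varepsilon|^{p-2}Du_\varepsilon\cdot\nu=0.
\]
Subtracting and using the $C^{1,\beta}$ convergence \eqref{strong-converg} on $\eta$ eliminates the outer surface integral in the limit, so the proposition reduces to bounding the flux of $u_0$ over $\Gamma_{-,r}^0$ and the lateral flux of $u_\varepsilon$ over $\Sigma_r^\varepsilon$.

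For the first bound, since $u_0\equiv U_0$ on $\overline{\cD_1^0}\cup\overline{\cD_2^0}$ under the hypothesis $p\ge\frac{n+\gamma}{1+\gamma}$, applying \eqref{asym-Du-gamma} (or directly Lemma~\ref{lem-Dv}(ii)) to $u_0-U_0$ yields $|Du_0(x)|\le C_1 e^{-C_2/|x'|^\gamma}\|\varphi\|_{L^\infty(\partial\cD)}$ on $\Gamma_{-,r}^0$, and since $|x'|\le r$ there, this is dominated by $C_1 e^{-C_2/r^\gamma}$, which we absorb into the form $C_1 e^{-C_2/r}$ by adjusting $C_2$. For the lateral piece, \eqref{asym-Du-gamma} together with \eqref{delta-asym} gives a pointwise bound $|Du_\varepsilon|\lesssim \Theta(\varepsilon;p,\gamma)/(\varepsilon+r^{1+\gamma})$ on $\Sigma_r^\varepsilon$, whose $x_n$-width is at most $\varepsilon$, producing a flux bound of order
\[
\varepsilon\, r^{n-2}\,\bigl(\Theta(\varepsilon;p,\gamma)\bigr)^{p-1}\,(\varepsilon+r^{1+\gamma})^{-(p-1)}\longrightarrow 0\quad\text{as }\varepsilon\to 0_+,
\]
by inspection of \eqref{def-Theta-gamma} in both cases $p>\frac{n+\gamma}{1+\gamma}$ and $p=\frac{n+\gamma}{1+\gamma}$.

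The main delicate point is the borderline case $p=\frac{n+\gamma}{1+\gamma}$, where $\Theta(\varepsilon;p,\gamma)^{p-1}$ decays only as $|\ln\varepsilon|^{-1}$; nevertheless, the $\varepsilon$-factor from the $x_n$-width of $\Sigma_r^\varepsilon$ dominates and forces the lateral integral to vanish in the limit. A second modest check is that in the $C^{1,\gamma}$ setting the flat-case distance function $d(x')$ collapses to $|x'|$, so the lateral surface must be anchored at $|x'|=r$ rather than $d(x')=r$, consistent with \eqref{def-Omega}. Combining these pieces produces the claimed exponentially small remainder and completes the proof.
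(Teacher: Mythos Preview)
Your overall strategy correctly mirrors the proof of Proposition~\ref{prop-converg-F}, which is exactly what the paper intends. However, there is a circularity in your treatment of the lateral flux over $\Sigma_r^\varepsilon$: you invoke the pointwise bound $|Du_\varepsilon|\lesssim \Theta(\varepsilon;p,\gamma)/(\varepsilon+r^{1+\gamma})$, but \eqref{asym-Du-gamma} only yields $|Du_\varepsilon|\lesssim |U_1^\varepsilon-U_2^\varepsilon|/\delta(x')$ plus an exponentially small remainder. Converting $|U_1^\varepsilon-U_2^\varepsilon|$ into $\Theta(\varepsilon;p,\gamma)$ requires Theorem~\ref{thm-U12-gamma}, whose proof uses Proposition~\ref{prop-gamma} itself. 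The fix is immediate and matches what the paper does in the flat case: use the crude bound $|U_1^\varepsilon-U_2^\varepsilon|\le 2\|\varphi\|_{L^\infty(\partial\cD)}$ from \eqref{bdd-phi}, so that $|Du_\varepsilon|\le C/\delta(x')$ on $\Sigma_r^\varepsilon$. The lateral flux is then bounded by $C\varepsilon\, r^{n-2}(\varepsilon+r^{1+\gamma})^{-(p-1)}\to 0$ as $\varepsilon\to 0$ for each fixed $r$, with no need to invoke $\Theta$ or to treat the borderline exponent separately.

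A second, smaller point: your claim that $e^{-C_2/r^\gamma}$ can be ``absorbed into the form $C_1 e^{-C_2/r}$ by adjusting $C_2$'' does not hold for $\gamma\in(0,1)$ and $r\in(0,1)$, since $r^\gamma\ge r$ gives $e^{-C_2/r^\gamma}\ge e^{-C_2/r}$, and the ratio $e^{-C_2/r^\gamma}/e^{-C_2'/r}$ blows up as $r\to 0$ for any $C_2'>0$. The decay that actually comes out of Lemma~\ref{lem-Dv}(ii) (with $\varepsilon=0$) is $C_1e^{-C_2/r^\gamma}$; this is entirely sufficient for the only downstream use of the proposition, namely sending $r\to 0$ in the proof of Theorem~\ref{thm-U12-gamma}.
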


\begin{theorem}\label{thm-U12-gamma}
Let $p\geq\frac{n+\gamma}{1+\gamma}$, and $U_1^\varepsilon$ and $U_2^\varepsilon$ be the constants in \eqref{p-laplace}. Then we have
\begin{align*}
(c_0^{-\frac{n-1}{1+\gamma}}K\mathcal{F})^{\frac{1}{p-1}}&\leq\liminf_{\varepsilon\rightarrow0}\frac{U_1^{\varepsilon}-U_2^{\varepsilon}}{\Theta(\varepsilon;p,\gamma)}\\
&\leq\limsup_{\varepsilon\rightarrow0}\frac{U_1^{\varepsilon}-U_2^{\varepsilon}}{\Theta(\varepsilon;p,\gamma)}\leq(c_0^{\frac{n-1}{1+\gamma}}K\mathcal{F})^{\frac{1}{p-1}},\quad\mbox{if}\quad\mathcal{F}\geq 0,
\end{align*}
and 
\begin{align*}
-(-c_0^{\frac{n-1}{1+\gamma}}K\mathcal{F})^{\frac{1}{p-1}}&\leq\liminf_{\varepsilon\rightarrow0}\frac{U_1^{\varepsilon}-U_2^{\varepsilon}}{\Theta(\varepsilon;p,\gamma)}\\
&\leq\limsup_{\varepsilon\rightarrow0}\frac{U_1^{\varepsilon}-U_2^{\varepsilon}}{\Theta(\varepsilon;p,\gamma)}\leq-(-c_0^{-\frac{n-1}{1+\gamma}}K\mathcal{F})^{\frac{1}{p-1}},\quad\mbox{if}\quad\mathcal{F}<0,
\end{align*}
where $c_0$, $\Theta(\varepsilon;p,\gamma)$, $K$, and $\mathcal{F}$ are defined in \eqref{est-a}, \eqref{def-Theta-gamma}, \eqref{def-K0}, and \eqref{def-F} with $p\geq\frac{n+\gamma}{1+\gamma}$, respectively.
\end{theorem}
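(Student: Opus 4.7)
The plan is to follow the structure of the proof of Theorem \ref{thm-U12} closely, replacing Lemma \ref{lem-Theta} by Lemma \ref{lem-Theta-gamma} and Proposition \ref{prop-converg-F} by Proposition \ref{prop-gamma}. The crucial point is that Lemma \ref{lem-Theta-gamma} only sandwiches the limit of $\int_{|x'|<r}(\Theta(\varepsilon;p,\gamma)/\delta(x'))^{p-1}\,dx'$ between $K^{-1}c_0^{\pm(n-1)/(1+\gamma)}$, because $\delta(x')=\varepsilon+a(x')|x'|^{1+\gamma}$ with $a$ only known to lie in $[c_0^{-1},c_0]$; this is exactly the reason why the theorem yields two-sided $\liminf/\limsup$ bounds rather than an exact limit.

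First I would start from \eqref{asym-Du-gamma}, applied on $\Gamma_{-,1/4}^\varepsilon$, to write
\begin{equation*}
\left|Du_\varepsilon\cdot\nu(y)-\frac{U_1^\varepsilon-U_2^\varepsilon}{\delta(y')}\right|\leq C\frac{|U_1^\varepsilon-U_2^\varepsilon|}{\delta(y')^{1-\beta/2}}+C_1 e^{-\frac{C_2}{\varepsilon^{\gamma/(1+\gamma)}+|y'|^\gamma}}\|\varphi\|_{L^\infty(\partial\cD)}.
\end{equation*}
Taking the $(p-1)$-st power via the elementary inequality $|(A+B)^{p-1}-A^{p-1}|\leq C(|A|^{p-2}|B|+|B|^{p-1})$ (applied with $A=(U_1^\varepsilon-U_2^\varepsilon)/\delta(y')$), and fixing a decreasing sequence $\varepsilon_j\to 0$ with a definite sign for $U_1^{\varepsilon_j}-U_2^{\varepsilon_j}$ (as in the flat case), one obtains for any $\kappa\in(0,1)$ the two-sided inequality
\begin{equation*}
(1\mp\kappa)\!\left(\frac{U_1^{\varepsilon_j}-U_2^{\varepsilon_j}}{\delta_j(y')}\right)^{\!p-1}\!(1\mp C\delta_j(y')^{\beta/2})\mp C(\kappa)e^{-\frac{C_2}{\varepsilon_j^{\gamma/(1+\gamma)}+|y'|^\gamma}}\lessgtr |Du_{\varepsilon_j}|^{p-2}Du_{\varepsilon_j}\cdot\nu(y).
\end{equation*}

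Next I would integrate over $\Gamma_{-,r}^{\varepsilon_j}$ (parametrized by $|y'|<r$ through $h_2$, whose gradient is small by \eqref{assump-h-gamma}) and pass to the limit $j\to\infty$. Proposition \ref{prop-gamma} shows the integral $\int_{\Gamma_{-,r}^{\varepsilon_j}}|Du_{\varepsilon_j}|^{p-2}Du_{\varepsilon_j}\cdot\nu$ converges to $\mathcal{F}$ up to an exponentially small $e^{-C_2/r}$ error, while Lemma \ref{lem-Theta-gamma} controls $\int_{|y'|<r}(\Theta(\varepsilon_j;p,\gamma)/\delta_j(y'))^{p-1}\,dy'$ by $K^{-1}c_0^{\pm(n-1)/(1+\gamma)}$ plus $o(1)$. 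The exponential integral vanishes as $j\to\infty$ (it is bounded by a constant multiple of $\int_{|y'|<r}e^{-C_2/|y'|^\gamma}\,dy'$), and the $\delta_j(y')^{\beta/2}$ correction contributes $O(r^{(1+\gamma)\beta/2})$ after integration. Dividing through by $\Theta(\varepsilon_j;p,\gamma)^{p-1}$ and then sending $r\to 0$ and $\kappa\to 0$ yields, in the case $\mathcal{F}>0$ (so that $U_1^{\varepsilon_j}\geq U_2^{\varepsilon_j}$ eventually),
\begin{equation*}
c_0^{-\frac{n-1}{1+\gamma}}K\mathcal{F}\leq\liminf_{\varepsilon\to 0}\!\left(\frac{U_1^\varepsilon-U_2^\varepsilon}{\Theta(\varepsilon;p,\gamma)}\right)^{\!p-1}\leq\limsup_{\varepsilon\to 0}\!\left(\frac{U_1^\varepsilon-U_2^\varepsilon}{\Theta(\varepsilon;p,\gamma)}\right)^{\!p-1}\leq c_0^{\frac{n-1}{1+\gamma}}K\mathcal{F}.
\end{equation*}
The case $\mathcal{F}<0$ is treated symmetrically by choosing subsequences along which $U_1^{\varepsilon_j}\leq U_2^{\varepsilon_j}$, and the borderline case $\mathcal{F}=0$ follows by a subsequence argument identical to the one at the end of the proof of Theorem \ref{thm-U12}.

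The main obstacle will be checking that no hidden information about the fine structure of $a(x')$ is needed: because $a$ enters $\delta$ inside the $(p-1)$-st power, the integrals cannot be computed exactly and one is forced to extract only the two-sided bounds of Lemma \ref{lem-Theta-gamma}. A secondary technical nuisance is the case $1<p<2$, where the algebraic inequality relating $|Du_\varepsilon|^{p-2}Du_\varepsilon\cdot\nu$ to the leading term $(U_1^\varepsilon-U_2^\varepsilon)^{p-1}/\delta^{p-1}$ needs the careful form above rather than a simple Taylor expansion; however, this is standard and parallels the treatment in Theorem \ref{thm-U12}. Everything else—the sign bookkeeping, the vanishing of the exponential remainder after integration, and the $r\to 0$, $\kappa\to 0$ double limit—is essentially mechanical once the two input lemmas are invoked.
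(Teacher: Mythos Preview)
Your proposal is correct and follows essentially the same route as the paper: the paper's proof of this theorem is a one-line reference stating that ``the desired result follows from the argument in the proof of Theorem \ref{thm-U12} together with Lemma \ref{lem-Theta-gamma} and Proposition \ref{prop-gamma},'' which is precisely what you outline. Your additional remarks about why only two-sided bounds (rather than an exact limit) emerge, and the care needed for $1<p<2$, are accurate elaborations of details the paper omits.
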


\begin{proof}
The desired result follows from the argument in the proof of Theorem \ref{thm-U12} together with Lemma \ref{lem-Theta-gamma} and Proposition \ref{prop-gamma}. Thus, we omit the details.
\end{proof}

Now we are in a position to prove Theorem \ref{thm-gamma}.
\begin{proof}[Proof of Theorem \ref{thm-gamma}]
When $p\geq\frac{n+\gamma}{1+\gamma}$, by using  Theorem \ref{thm-U12-gamma}, for sufficiently small $\varepsilon>0$, we have 
\begin{equation}\label{U1-U2-bdd}
\frac{1}{C_0}(K|\mathcal{F}|)^{\frac{1}{p-1}}\leq\frac{|U_1^\varepsilon-U_2^\varepsilon|}{\Theta(\varepsilon;p,\gamma)}\leq C_0(K|\mathcal{F}|)^{\frac{1}{p-1}},
\end{equation}
where $C_0>1$ is a constant depending on $n,p,\gamma,c_3$, and $c_4$. This together with \eqref{asym-Du-gamma} yields 
\begin{align*}
|Du_\varepsilon(x)|\leq C(K|\mathcal{F}|)^{\frac{1}{p-1}}\frac{\Theta(\varepsilon;p,\gamma)}{\delta(x')}(1+\delta(x')^{\beta/2})+C_1e^{-\frac{C_2}{\varepsilon^{\gamma/(1+\gamma)}+|x'|^\gamma}}\|\varphi\|_{L^\infty(\partial\cD)},
\end{align*}
where $C,C_1>0$ are constants depending on $n,p,\gamma,c_3$, and $c_4$, and $C_2>0$ depends on $n,p,c_3$, and $c_4$.
For any $x=(0',x_n)\in\Omega_R^\varepsilon$, by using \eqref{U1-U2-bdd} and \eqref{asym-Du-gamma}, we have 
\begin{align*}
|Du_\varepsilon(0',x_n)|&\geq\frac{1}{C_0}(K|\mathcal{F}|)^{\frac{1}{p-1}}\frac{\Theta(\varepsilon;p,\gamma)}{\varepsilon}-C(K|\mathcal{F}|)^{\frac{1}{p-1}}\frac{\Theta(\varepsilon;p,\gamma)}{\varepsilon^{1-\beta/2}}-C_1e^{-\frac{C_2}{\varepsilon^{\gamma/(1+\gamma)}}}\|\varphi\|_{L^\infty(\partial\cD)}\\
&=(K|\mathcal{F}|)^{\frac{1}{p-1}}\frac{\Theta(\varepsilon;p,\gamma)}{\varepsilon}\big(\frac{1}{C_0}-C\varepsilon^{\beta/2}\big)-C_1e^{-\frac{C_2}{\varepsilon^{\gamma/(1+\gamma)}}}\|\varphi\|_{L^\infty(\partial\cD)}.
\end{align*}
By choosing $\varepsilon>0$ small enough such that $\frac{1}{C_0}-C\varepsilon^{\beta/2}\geq\frac{1}{2C_0}$, we obtain
\begin{equation*}
|Du_\varepsilon(0',x_n)|\geq\frac{1}{2C_0}(K|\mathcal{F}|)^{\frac{1}{p-1}}\frac{\Theta(\varepsilon;p,\gamma)}{\varepsilon}-C_1e^{-\frac{C_2}{\varepsilon^{\gamma/(1+\gamma)}}}\|\varphi\|_{L^\infty(\partial\cD)}.
\end{equation*}

When  $1<p<\frac{n+\gamma}{1+\gamma}$, we have from \eqref{asym-Du-gamma} that
\begin{equation*}
|Du_\varepsilon(x)|\leq C(\varepsilon+|x'|^{1+\gamma})^{-1}.
\end{equation*}
Thus, we have 
\begin{equation*}
\left|\int_{\Gamma_{-,s}^0}|Du_\varepsilon|^{p-2}Du_\varepsilon\cdot\nu\right|\leq C\int_{|x'|<s}\frac{1}{|x'|^{(1+\gamma)(p-1)}}\leq Cs^{n+\gamma-p(1+\gamma)},\quad s\in(0,1),
\end{equation*}
which replaces the term $Cs^{n+1-2p}$ in the proof of \cite[Theorem 2.5]{dyz2024} when $p<(n+1)/2$. By subsequently following the argument in the same proof, we obtain
$$\lim_{\varepsilon\rightarrow0}f_{2}(\varepsilon)=0,$$
where 
$$f_{2}(\varepsilon):=U_1^\varepsilon-U_2^\varepsilon-(U_1-U_2).$$
This in combination with \eqref{asym-Du-gamma} finishes the proof of Theorem \ref{thm-gamma}.
\end{proof}

We end this section with two special cases as follows.

\begin{remark}\label{rmk-U1-U2-x'}
When $a=a(x'/|x'|)$ in \eqref{delta-asym}, we denote 
\begin{align*}
K_0:=\begin{cases}
\big(\int_{\mathbb{S}^{n-2}}\int_{0}^{\infty}\frac{s^{n-2}}{(1+a(\theta)s^{1+\gamma})^{p-1}}\ dsd\sigma(\theta)\big)^{-1},&\quad p>\frac{n+\gamma}{1+\gamma},\\
\big(\lim\limits_{\varepsilon\rightarrow0}|\ln\varepsilon|^{-1}\int_{\mathbb{S}^{n-2}}\int_0^{r\varepsilon^{-\frac{1}{1+\gamma}}}\frac{s^{n-2}}{(1+a(\theta)s^{1+\gamma})^{\frac{n-1}{1+\gamma}}}\ dsd\sigma(\theta)\big)^{-1},&\quad p=\frac{n+\gamma}{1+\gamma}.
\end{cases}
\end{align*}
Then it follows from \eqref{int-delta-gamma} that
\begin{equation*}
\lim_{\varepsilon\rightarrow0}\int_{|x'|<r}\left(\frac{\Theta(\varepsilon;p,\gamma)}{\delta(x')}\right)^{p-1}\ dx'=\frac{1}{K_0}.
\end{equation*}
Thus, as in the proof of Theorem \ref{thm-U12-gamma}, we obtain 
\begin{equation*}
\lim_{\varepsilon\rightarrow0}\frac{U_1^{\varepsilon}-U_2^{\varepsilon}}{\Theta(\varepsilon;p,\gamma)}=\text{sgn}(\mathcal{F})(K_0|\mathcal{F}|)^{\frac{1}{p-1}}.
\end{equation*}
Consequently, for small $\varepsilon\in(0,100^{-2})$, $x\in\Omega_{R}^\varepsilon$, and $p\geq \frac{n+\gamma}{1+\gamma}$, we have 
\begin{align*}
Du_\varepsilon(x)=\big(0',\delta(x')^{-1}\Theta(\varepsilon;p,\gamma)(\text{sgn}(\mathcal{F})(K_0|\mathcal{F}|)^{\frac{1}{p-1}}+f_0(\varepsilon))\big)+{\bf f}_0(x,\varepsilon),
\end{align*}
where $f_0: \mathbb R\rightarrow\mathbb R$ is a function of $\varepsilon$, and ${\bf f}_0: \mathbb R^n\times\mathbb R\rightarrow\mathbb R^n$ is a function of $x$ and $\varepsilon$ satisfying 
\begin{equation*}
\lim_{\varepsilon\rightarrow0}f_0(\varepsilon)=0
\end{equation*}
and 
\begin{align*}
|{\bf f}_0(x,\varepsilon)|\leq C\left(\delta(x')^{\beta/2-1}\Theta(\varepsilon;p,\gamma)((K_0|\mathcal{F}|)^{\frac{1}{p-1}}+| f_0(\varepsilon)|)\right)+C_1e^{-\frac{C_2}{\varepsilon^{\gamma/(1+\gamma)}+|x'|^\gamma}}\|\varphi\|_{L^\infty(\partial\cD)}.
\end{align*}
\end{remark}

\begin{remark}\label{rmk-U1U2-11}
In particular, if $a=a_0$ for some constant $a_0$, then from the proof of Lemma \ref{lem-Theta-gamma}, we have
\begin{equation*}
\lim_{\varepsilon\rightarrow0}\int_{|x'|<r}\left(\frac{\Theta(\varepsilon;p,\gamma)}{\delta(x')}\right)^{p-1}\ dx'=\frac{1}{K}a_0^{\frac{1-n}{1+\gamma}}
\end{equation*}
and
\begin{equation*}
\lim_{\varepsilon\rightarrow0}\frac{U_1^\varepsilon-U_2^\varepsilon}{\Theta(\varepsilon;p,\gamma)}=\text{sgn}(\mathcal{F})(Ka_0^{\frac{n-1}{1+\gamma}}|\mathcal{F}|)^{\frac{1}{p-1}}.
\end{equation*}
Thus, for small $\varepsilon\in(0,100^{-2})$, $x\in\Omega_{R}^\varepsilon$, and $p\geq \frac{n+\gamma}{1+\gamma}$, we have 
\begin{align*}
Du_\varepsilon(x)=\big(0',\delta(x')^{-1}\Theta(\varepsilon;p,\gamma)(\text{sgn}(\mathcal{F})(Ka_0^{\frac{n-1}{1+\gamma}}|\mathcal{F}|)^{\frac{1}{p-1}}+f(\varepsilon))\big)+{\bf f}(x,\varepsilon),
\end{align*}
where  $f: \mathbb R\rightarrow\mathbb R$ is a function of $\varepsilon$ and ${\bf f}: \mathbb R^n\times\mathbb R\rightarrow\mathbb R^n$ is a function of $x$ and $\varepsilon$ satisfying 
\begin{equation*}
\lim_{\varepsilon\rightarrow0}f(\varepsilon)=0
\end{equation*}
and 
\begin{align*}
|{\bf f}(x,\varepsilon)|\leq C\left(\delta(x')^{\beta/2-1}\Theta(\varepsilon;p,\gamma)((K|a_0|^{\frac{n-1}{1+\gamma}}|\mathcal{F}|)^{\frac{1}{p-1}}+|f(\varepsilon)|)\right)+C_1e^{-\frac{C_2}{\varepsilon^{\gamma/(1+\gamma)}+|x'|^\gamma}}\|\varphi\|_{L^\infty(\partial\cD)}.
\end{align*}
\end{remark}

\appendix

\section{Proof of Lemma \ref{lem-Dv}}\label{Append}
In this section, we give the proof of Lemma \ref{lem-Dv}.

\begin{proof}[Proof of Lemma \ref{lem-Dv}]
{\bf Case 1}: $h_1$ and $h_2$ are two $C^2$ functions satisfying \eqref{assump-h}.
As in the proof of \cite[Lemma 2.1]{dyz2024}, we may assume $\varepsilon\in(0,1/256]$ and $|x'|<1/16$ since otherwise the results in Lemma \ref{lem-Dv} follow from classical estimates for the $p$-Laplace equation (see, for instance, \cite{l1988}). Denote 
\begin{equation*}
\mathcal{C}_r^{\varepsilon}:=\left\{(x',x_n)\in\Omega^\varepsilon: -\frac{\varepsilon}{2}+h_2(x')<x_n<\frac{\varepsilon}{2}+h_1(x'), ~d(x')<r\right\},\quad r\in(0,1),
\end{equation*}
where $d(x')$ is defined in \eqref{def-dist}.
For any $0<t<s<1-R_0$ with $0<R_0<1/3$ given in \eqref{sigma'}, let $\eta(x')$ be a cutoff function such that $\eta=1$ in $\mathcal{C}_t^\varepsilon$ and $\eta=0$ in $\mathcal{C}_1^\varepsilon\setminus\mathcal{C}_s^\varepsilon$, and $\eta=\frac{s-d(x')}{s-t}$ in $\mathcal{C}_s^\varepsilon$. Note that $|D\eta|\le \frac{1}{s-t}$. Multiplying $v\eta^p$ on both sides of \eqref{eq-v-p} and integrating by parts, we have
\begin{equation*}
\int_{\mathcal{C}_1^\varepsilon}|Dv|^p\eta^p+p|Dv|^{p-2}Dv\cdot D\eta v\eta^{p-1}=0.
\end{equation*}
Then by using Young's inequality, we obtain
\begin{align*}
\int_{\mathcal{C}_1^\varepsilon}|Dv|^p\eta^p\leq p\int_{\mathcal{C}_1^\varepsilon}|Dv|^{p-1}|D\eta| |v|\eta^{p-1}\leq \frac{1}{2}\int_{\mathcal{C}_1^\varepsilon}|Dv|^p\eta^p+C\int_{\mathcal{C}_1^\varepsilon}|D\eta|^p |v|^p,
\end{align*}
which  implies 
\begin{equation}\label{est-v-Dv}
\int_{\mathcal{C}_t^\varepsilon}|Dv|^p\leq\frac{C}{(s-t)^p}\int_{\mathcal{C}_s^\varepsilon\setminus\mathcal{C}_t^\varepsilon}|v|^p.
\end{equation}
For any $x\in\mathcal{C}_s^\varepsilon$, we have $|x'|<s+R_0<1$. Thus, by using $v=0$ on $\Gamma_-^{\varepsilon}$, the Poincar\'{e} inequality, and \eqref{est-v-Dv}, we have 
\begin{align}\label{ite-Dv-flat2zz}
\int_{\mathcal{C}_t^\varepsilon}|Dv|^p&\leq \frac{C}{(s-t)^p}\int_{\mathcal{C}_s^\varepsilon\setminus\mathcal{C}_t^\varepsilon}(\varepsilon+d(x')^2)^p|Dv|^p\leq C_0\left(\frac{\varepsilon+s^2}{s-t}\right)^p\int_{\mathcal{C}_s^\varepsilon\setminus\mathcal{C}_t^\varepsilon}|Dv|^p.
\end{align}
Take $t_0=r\in(\sqrt\varepsilon,1/2)$ and $t_j=\big(1-\frac{1}{2}jr\big)r$ with $j\in\mathbb N$ satisfying $j\leq 2r^{-1}$. Then we take $s=t_j$ and $t=t_{j+1}$ in \eqref{ite-Dv-flat2zz}, we obtain
\begin{equation*}
\int_{\mathcal{C}_{t_{j+1}}^\varepsilon}|Dv|^p\leq 4^pC_0\int_{\mathcal{C}_{t_j}^\varepsilon\setminus\mathcal{C}_{t_{j+1}}^\varepsilon}|Dv|^p.
\end{equation*}
Adding both sides by $4^pC_0\int_{\mathcal{C}_{t_{j+1}}^\varepsilon}|Dv|^p$ and dividing both sides by $1+4^pC_0$, we obtain
\begin{equation*}
\int_{\mathcal{C}_{t_{j+1}}^\varepsilon}|Dv|^p\leq \frac{4^pC_0}{1+4^pC_0}\int_{\mathcal{C}_{t_j}^\varepsilon}|Dv|^p.
\end{equation*}
We then choose $k=\lfloor\frac{1}{2r}\rfloor$ and iterate the above inequality $k$ times. By using \eqref{est-v-Dv} and $r<1/2$,  we have 
\begin{equation*}
\int_{\mathcal{C}_{3r/4}^\varepsilon}|Dv|^p\leq \left(\frac{4^pC_0}{1+4^pC_0}\right)^k\int_{\mathcal{C}_{r}^\varepsilon}|Dv|^p\leq C\mu_1^{\frac{1}{r}}\int_{\mathcal{C}_{1-R_0}^\varepsilon\setminus\mathcal{C}_{1/2}^\varepsilon}|v|^p,
\end{equation*}
where $\mu_1\in(0,1)$ and $C>0$ are constants depending on $n,p,c_1$, and $c_2$. By using \eqref{def-Omega} and $d(x')\leq |x'|$, we have  $\Omega_{1/2}^\varepsilon\subset\mathcal{C}_{1/2}^\varepsilon$ and $\mathcal{C}_{1-R_0}^\varepsilon\subset\Omega_{1}^\varepsilon$, where $\Omega_{r}^\varepsilon$ is defined in \eqref{def-Omega}. Thus, we obtain
\begin{equation}\label{est-Dv-p-v}
\int_{\mathcal{C}_{3r/4}^\varepsilon}|Dv|^p\leq C\mu_1^{\frac{1}{r}}\int_{\Omega_{1}^\varepsilon\setminus\Omega_{1/2}^\varepsilon}|v|^p.
\end{equation}
For any $x\in\overline{\Omega_{1/3}^\varepsilon}$ and small $\varepsilon\in(0,100^{-2})$, we take $\rho=\frac{1}{6}(\varepsilon+d(x')^2)$ and  $r=\frac{17}{12}(\sqrt\varepsilon+d(x'))<1/2$. Then for any $y\in\Omega_{\rho}^\varepsilon(x)$ given in \eqref{def-Omega}, we have 
\begin{align*}
d(y')\leq d(x')+\rho< 3r/4,
\end{align*}
and hence, $\Omega_{\rho}^\varepsilon(x)\subset \mathcal{C}_{3r/4}^\varepsilon$.
By applying classical estimates for the $p$-Laplace equation in $\Omega_{\rho}^\varepsilon(x)$ with $x\in\overline{\Omega_{1/3}^\varepsilon}$ (see, for instance, \cite{l1988}), and using \eqref{est-Dv-p-v}, we derive
\begin{align}\label{est-Dv-p}
|Dv(x)|&\leq \frac{C}{\rho^{n/p}}\left(\int_{\Omega_{\rho}^\varepsilon(x)}|Dv|^p\right)^{1/p}\nonumber\\
&\leq \frac{C}{r^{2n/p}}\left(\int_{\mathcal{C}_{3r/4}^\varepsilon}|Dv|^p\right)^{1/p}\nonumber\\
&\leq \frac{C}{r^{2n/p}}\mu_1^{\frac{1}{pr}}\|v\|_{L^p(\Omega_1^\varepsilon\setminus\Omega_{1/2}^\varepsilon)}\leq C_1e^{-\frac{C_2}{\sqrt\varepsilon+d(x')}}\|v\|_{L^p(\Omega_{1}^\varepsilon\setminus\Omega_{1/2}^\varepsilon)}.
\end{align}
The desired result is proved.

{\bf Case 2}: $h_1$ and $h_2$ are two $C^{1,\gamma}$ functions satisfying \eqref{assump-h-gamma}. In this case, \eqref{est-v-Dv} and \eqref{ite-Dv-flat2zz} are replaced by, respectively, 
\begin{equation}\label{est-v-Dv-2}
\int_{\Omega_t^\varepsilon}|Dv|^p\leq\frac{C}{(s-t)^p}\int_{\Omega_s^\varepsilon\setminus\Omega_t^\varepsilon}|v|^p
\end{equation}
and
\begin{equation}\label{ite-Dv}
\int_{\Omega_t^\varepsilon}|Dv|^p\leq \tilde C_0\left(\frac{\varepsilon+s^{1+\gamma}}{s-t}\right)^p\int_{\Omega_s^\varepsilon\setminus\Omega_t^\varepsilon}|Dv|^p.
\end{equation}
Take $t_0=r\in(\varepsilon^{1/(1+\gamma)},1/2)$ and $t_j=\big(1-\frac{1}{4}jr^{\gamma}\big)r$ with $j\in\mathbb N$ such that $j\leq 4r^{-\gamma}$. Now we take $s=t_j$ and $t=t_{j+1}$ in \eqref{ite-Dv}, we obtain
\begin{equation*}
\int_{\Omega_{t_{j+1}}^\varepsilon}|Dv|^p\leq 8^p\tilde C_0\int_{\Omega_{t_j}^\varepsilon\setminus\Omega_{t_{j+1}}^\varepsilon}|Dv|^p,
\end{equation*}
and thus,
\begin{equation*}
\int_{\Omega_{t_{j+1}}^\varepsilon}|Dv|^p\leq \frac{8^p\tilde C_0}{1+8^p\tilde C_0}\int_{\Omega_{t_j}^\varepsilon}|Dv|^p.
\end{equation*}
Choose $k=\lfloor\frac{1}{r^\gamma}\rfloor$ and iterate the above inequality $k$ times. By using \eqref{est-v-Dv-2} and the fact that $r<1/2$,  we have 
\begin{equation}\label{Dv-gamma}
\int_{\Omega_{3r/4}^\varepsilon}|Dv|^p\leq \left(\frac{8^p\tilde C_0}{1+8^p\tilde C_0}\right)^k\int_{\Omega_{r}^\varepsilon}|Dv|^p\leq C\mu_2^{\frac{1}{r^\gamma}}\int_{\Omega_1^\varepsilon\setminus\Omega_{1/2}^\varepsilon}|v|^p,
\end{equation}
where $\mu_2\in(0,1)$ and $C>0$ are constants depending on $n,p,c_3$, and $c_4$. For any $x\in\overline{\Omega_{1/3}^\varepsilon}$ and small $\varepsilon\in(0,100^{-2})$, we choose $r=\frac{17}{12}(\varepsilon^{1/(1+\gamma)}+|x'|)$ and $\rho=\frac{1}{20}(\varepsilon+|x'|^{1+\gamma})$, such that
$\Omega_{\rho}^\varepsilon(x)\subset\Omega_{3r/4}^\varepsilon$.
Similar to \eqref{est-Dv-p}, by applying classical estimates for the $p$-Laplace equation in $\Omega_{\rho}^\varepsilon(x)$ with $x\in\overline{\Omega_{1/3}^\varepsilon}$, and using \eqref{Dv-gamma}, we obtain 
\begin{equation*}
|Dv(x)|\leq C_1e^{-\frac{C_2}{\varepsilon^{\gamma/(1+\gamma)}+|x'|^\gamma}}\|v\|_{L^p(\Omega_{1}^\varepsilon\setminus\Omega_{1/2}^\varepsilon)},
\end{equation*}
where $C_1>0$ is a constant depending on $n,p,\gamma,c_3$, and $c_4$, and $C_2>0$ depends on $n,p,c_3$, and $c_4$. Therefore, the proof is finished. 
\end{proof}





\bibliographystyle{abbrv}
\bibliography{ref}

\end{document}